\documentclass[reqno, 11pt]{amsart}
\usepackage{amssymb, amsmath, latexsym,enumerate}
\usepackage{graphicx,mathrsfs}
\usepackage{xcolor}
\usepackage{float, leftindex, ragged2e}

\newcounter{cnt1}
\newcounter{cnt2}
\newcounter{cnt3}
\newcommand{\blr}{\begin{list}{$($\roman{cnt1}$)$}
 {\usecounter{cnt1} \setlength{\topsep}{0pt}
 \setlength{\itemsep}{0pt}}}
\newcommand{\bla}{\begin{list}{$($\alph{cnt2}$)$}
 {\usecounter{cnt2} \setlength{\topsep}{0pt}
 \setlength{\itemsep}{0pt}}}
\newcommand{\bln}{\begin{list}{$($\arabic{cnt3}$)$}
 {\usecounter{cnt3} \setlength{\topsep}{0pt}
 \setlength{\itemsep}{0pt}}}
\newcommand{\el}{\end{list}}
\newtheorem{theorem}{Theorem}[section] 
\newtheorem{lemma}[theorem]{Lemma}

\newtheorem{example}[theorem]{Example}

\newtheorem{definition}[theorem]{Definition}
\newtheorem{proposition}[theorem]{Proposition}
\newtheorem{remark}[theorem]{Remark}
\newcommand{\Rem}{\begin{rem} \rm}
\newcommand{\bdefinition}{\begin{Def} \rm}
\newcommand{\edefinition}{\end{Def}}

\newcommand{\ba}{\begin{array}}
\newcommand{\ea}{\end{array}}

\begin{document}

\begin{center}{\Large {Applications of Fractal Sumudu Transform in Economic Models}}
		\vspace{0.05cm}
		
	  Krishna Mani Nath$^1$, Bipan Hazarika$^{1, \ast}$, Hemanta Kalita$^2$  

$^1$Department of Mathematics, Gauhati University, Guwahati, Assam, India\\
	
	$^2$Mathematics Division, School of Advanced Sciences and Languages, VIT Bhopal University, Bhopal- Indore Highway,   India\\

{Email:  kmn.math@gmail.com; bh\_rgu@yahoo.com; hemanta30kalita@gmail.com}
	
\end{center}
	\title{}
	\author{}
	\thanks{$^{\ast}$The corresponding author}
	\thanks{\today} 
	 
	\begin{abstract} 
         In this paper, we present a new fractal derivative with a nonsingular kernel and analyze its fundamental properties. The effectiveness of the proposed operator is illustrated through the study of economic models using both the Caputo fractal derivative and the new fractal derivative.\\
         {\bf Keywords.} Price adjustment, Market equilibrium, Economic model, Fractal derivative.\\
    {\bf Mathematics Subject Classification:} 28A80, 31E05, 91B02, 91B24, 91B55.
	\end{abstract}

	\maketitle
	

	\pagestyle{myheadings}
	\markboth{\rightline {\scriptsize {\textbf{\it Nath, Hazarika, Kalita} }}}
	{\leftline{ \scriptsize{ \bf \it Applications of Fractal Sumudu Transform in Economic Models}}}	
	\maketitle


\section{Introduction}
In earlier times, mathematics mainly focused on sets and functions that could be explained using the ideas of classical calculus. Sets or functions that were not smooth or regular enough were often seen as unusual or unimportant and were mostly ignored. They were treated as isolated examples rather than as part of a wider theory.

In recent years, this view has changed. Mathematicians have realized that the study of irregular or non-smooth objects is both meaningful and necessary. Such sets often describe natural shapes and patterns more accurately than traditional geometry. Fractal geometry, in particular, offers a strong and general framework for understanding and analyzing these irregular structures.

Fractal geometry, introduced by Benoit B. Mandelbrot \cite{Mandelbrot}, provided a new perspective to describe and analyze complex forms that occur in nature. It has become an essential mathematical framework for modeling and understanding systems with irregular or intricate structures. Through fractal geometry, scientists can more accurately explain and classify random or organic patterns that closely resemble natural phenomena \cite{Barnsley, Ali5, Parvate2003,Turner}.

This field has strong connections with several scientific areas, including biological sciences, astrophysics, and computer graphics. In recent decades, fractal geometry has been increasingly studied as the underlying geometry of many natural and random processes \cite{Barnsley, Ali1}. Fractals are characterized by self-similarity, in which smaller parts of the structure resemble the entire shape. In short, fractals are sets whose fractal dimension exceeds their topological dimension.


Mathematics helps economists describe and analyze complex economic problems. Using mathematical models, they can study market behavior, predict outcomes, and solve problems related to equilibrium, optimization, and dynamic changes. Tools such as calculus, optimization, and comparative statics allow economists to understand how variables such as demand, supply, and price interact. These methods help build models to predict market behavior and find strategies to maximize profit.

Price adjustment is an important concept in markets, especially in competitive markets, where demand changes continuously with price. Some firms offer refunds or compensation for short-term price changes to attract consumers and stabilize the market. A key question is whether such policies benefit sellers when buyers expect future price changes. For more information on the price adjustment strategy, see \cite{Cohen, Takayama}.

In competitive markets, many buyers and sellers interact while trying to maximize utility or profit. Economic theory, supported by mathematical models, helps explain how these interactions determine prices and quantities. Mathematics makes it easier to analyze and predict the dynamics of supply, demand, and prices.

The mathematical formulation of economic and financial systems has long relied on differential equations and mathematical models to describe the evolution of economic variables and to analyze their interrelationships. Classical differential equations provide a rigorous framework for representing dynamic processes and remain fundamental to the mathematical treatment of continuous-time economic models \cite{Nagle2011}. In mathematical economics, such models are widely employed to study the behavior of prices, demand, supply, investment, growth, and other economic quantities through deterministic and dynamic relationships \cite{Takayama}. However, many real-world economic and financial phenomena exhibit irregularities, scaling properties, memory effects, and non-smooth temporal behavior that may not be adequately captured by conventional integer-order models. In this context, fractal-based approaches provide an alternative framework for incorporating complex temporal structures into mathematical models. In particular, Golmankhaneh et al. \cite{Ali5} introduced economic models involving time fractal, demonstrating how fractal characteristics of time can be incorporated into economic dynamics and providing a broader mathematical perspective for modeling processes with nonclassical temporal behavior. Such developments motivate the investigation of generalized mathematical models in which the structure of time itself can influence the evolution of economic variables, potentially offering a more flexible representation of complex and irregular economic and financial phenomena.

The concept of competitive equilibrium is central; it occurs in a competitive market when the amount of a product that consumers want to buy matches the amount that producers are willing to sell. Mathematically, this balance can be expressed using a demand function $q_d$ and a supply function $q_s$, where each represents the quantity demanded and the quantity supplied, respectively. These two functions can be written as \begin{eqnarray}\label{i1}
    q_d(t)=d_0-d_1p(t),\;\;q_s(t)=-s_0+s_1p(t),
\end{eqnarray} where $p$ is the price of the goods, $d_0$, $s_0$, $d_1$, $s_1$ are constants affecting the quantity demanded and the quantity supplied. For the equilibrium price, we have $$q_d(t)=q_s(t)\implies p^\ast=\dfrac{d_0+s_0}{d_1+s_1}.$$ In this situation, the quantity demanded matches the quantity supplied, so neither shortage nor surplus occurs.

We consider the following simple price adjustment equation discussed in \cite{Nagle2011} \begin{align}\label{i2}
    \dfrac{dp}{dt}=\lambda(q_d-q_s),
\end{align} where $\lambda>0$. Using \eqref{i1} in \eqref{i2} yields \begin{align*}
    &\dfrac{dp}{dt}+\lambda(d_1+s_1)p(t)=\lambda(d_0+s_0)\\\implies& p(t)=\dfrac{d_0+s_0}{d_1+s_1}-\left(p(0)+\dfrac{d_0+s_0}{d_1+s_1}\right)e^{-\lambda(d_1+s_1)t},
\end{align*} where $p(0)$ is the market price at time $t=0$. On the other hand, considering the expectations of the agents, \eqref{i1} changes to \begin{align*}
    q_d(t)=d_0-d_1p(t)+d_2p'(t),\quad q_s(t)=-s_0+s_1p(t)-s_2 p'(t),
\end{align*} where $d_2,s_2\geqslant 0$. For equilibrium, we have \begin{align*}
    q_d(t)=q_s(t)\implies p'(t)-\dfrac{d_1+s_1}{d_2+s_2}p(t)=-\dfrac{d_0+s_0}{d_2+s_2}.
\end{align*}
In solving, we get \begin{align*}
    p(t)=\dfrac{d_0+s_0}{d_1+s_1}-\left[p(0)+\dfrac{d_0+s_0}{d_1+s_1}\right]e^{\dfrac{d_1+s_1}{d_2+s_2}t}.
\end{align*}
Differentiation and integration constitute fundamental mathematical tools for analyzing economic processes, formulating economic relationships, and developing mathematical models. However, the behavior and decision-making of economic agents are often influenced by previous economic conditions and historical fluctuations. In such cases, conventional integer-order derivatives and integrals may not adequately capture the effects of the past.

\section{Preliminaries to Fractal Calculus}

In this section, we recall some basic definitions related to Fractal calculus and Fractal Sumudu transforms.
\begin{definition}\cite{Parvate2003}
    The flag function for a set $F$ and a closed interval $\mathbb{I}=[x,y]$ is defined by \begin{eqnarray}\label{ms1}\Theta(F,\mathbb{I})=\begin{cases}
        1 & \text{ if }F\cap\mathbb{I}\neq\phi,\\0 & \text{otherwise.}
    \end{cases}\end{eqnarray}
\end{definition}
\begin{definition}\cite{Parvate2003}
    Let $\mathbb{P}_{[x,y]}=\{x=t_0,t_1,\ldots,t_n=y\}$ be a subdivision of $\mathbb{I}$. For a fractal set $F$ and the subdivision $\mathbb{P}_{[x,y]}$, we have \begin{eqnarray}\label{ms2}
        \sigma^\alpha[F,\mathbb{P}]=\begin{cases}
            {\displaystyle\sum_{i=0}^{n-1}\dfrac{(x_{i+1}-x_i)^\alpha}{\Gamma(\alpha+1)}\Theta(F,[x_i,x_{i+1}])} & \text{if }x<y,\\ 0  & \text{otherwise},
        \end{cases}
    \end{eqnarray}where $0<\alpha\leqslant 1$.
\end{definition}
\begin{definition}\cite{Parvate2003}
    Given $\delta>0$ and $x\leqslant y$, the coarse-grained mass of $F\cap[x,y]$ is given by \begin{eqnarray}\label{ms3}
        \gamma_\delta^\alpha(F,x,y)=\inf_{\{\mathbb{P}_{[x,y]}:|\mathbb{P}|\leqslant\delta\}}\sigma^\alpha[F,\mathbb{P}],
    \end{eqnarray} where $$|\mathbb{P}|=\max_{0\leqslant i\leqslant n-1}(x_{i+1}-x_i)$$ for a subdivision $\mathbb{P}$.
\end{definition}
\begin{definition}\cite{Parvate2003}
    The mass function $\gamma^\alpha(F,x,y)$ is given by \begin{eqnarray}\label{ms4}
        \gamma^\alpha(F,x,y)=\lim_{\delta\to 0}\gamma_\delta^\alpha(F,x,y).
    \end{eqnarray}
\end{definition}
\begin{definition}\cite{Parvate2003}
    Let $l$ be an arbitrary and fixed real number. The integral staircase function of order $\alpha$ for a set $F$ is given by \begin{eqnarray}\label{ms5}
        S_F^\alpha(x)=\begin{cases}
            \gamma^\alpha(F,l,x) & \text{if }x\geqslant l, \\
            -\gamma^\alpha(F,x,l) & \text{otherwise}.
        \end{cases}
    \end{eqnarray}
\end{definition}

\subsection{Sumudu Transform}
Consider a function $f(S_F^\alpha(t))$ defined for $t\geqslant0$ with fractal support. So, this is expressible as a polynomial or a convergent series. Let us denote the Sumudu transform of this by $\mathscr{S}(S_F^\alpha(v))$. Now, \begin{eqnarray} \label{st1}
    f(S_F^\alpha(t))= a_0+a_1S_F^\alpha(t)+a_2S_F^\alpha(t)^2+\ldots=\sum_{i=0}^{\infty}a_iS_F^\alpha(t)^i,
\end{eqnarray} where $a_0,\,a_1,\,a_2,\,\ldots$ are constants, and $a_i$ can be $0$ for some $i$.

We define  \begin{eqnarray}\label{st2}
    \mathcal{G}(S_F^\alpha(v))=a_0+a_1S_F^\alpha(v)+2!a_2S_F^\alpha(v)^2+3!a_3S_F^\alpha(v)^3+\ldots=\sum_{k=0}^{\infty}\left.\left(D_F^\alpha\right)^kf(S_F^\alpha(t))\right|_{t=0}S_F^\alpha(v)^k.
\end{eqnarray}
If $f(S_F^\alpha(t))$ is a polynomial, only finite number of terms will be involved in the expression of $\mathcal{G}(S_F^\alpha(v))$. In this case, the Sumudu transform of $f(S_F^\alpha(t))$ is \cite{Ali2}\begin{eqnarray}\label{st3}
   \mathscr{S}({S_F}^\alpha(v))=\mathcal{G}({S_F}^\alpha(v)).
\end{eqnarray}
If $f(S_F^\alpha(t))$ is a power series, there exists $M>0$ such that the series defined in (\ref{st2}) for $\mathcal{G}(S_F^\alpha(v))$ converges for $0\leqslant v<M$. Then it may also be possible to obtain a closed-form expression for $\mathcal{G}(S_F^\alpha(v))$. Let us denote the
sum of the convergent series (and the closed-form expression if it can be found) $\mathcal{G}^\ast(S_F^\alpha(v))$. Then the Sumudu transform of $f(S_F^\alpha(t))$ is $\mathcal{G}^\ast(S_F^\alpha(v))$, that is, \begin{eqnarray}\label{st4}
    \mathscr{S}(S_F^\alpha(v))=\mathcal{G}^\ast(S_F^\alpha(v)).
\end{eqnarray}
\begin{example}\cite{Ali2}
    The characteristic function with fractal support is \begin{eqnarray}\label{st5}
        \chi_F(t)=\begin{cases}
            0 & \text{when }t<0,\\\dfrac{1}{\Gamma(\alpha+1)} & \text{when }t\geqslant0.
        \end{cases}
    \end{eqnarray}Using the definition, the fractal Sumudu tranform of \normalfont{(\ref{st5})} is $$\mathscr{S}(S_F^\alpha(v))=\dfrac{1}{\Gamma(\alpha+1)},\;t\geqslant0.$$
\end{example} 
\begin{example}\cite{Ali2}
    Consider the function $g(S_F^\alpha(t))=e^{aS_F^\alpha(t)}$, where $c$ is a constant. Here \begin{eqnarray}\label{st6}e^{aS_F^\alpha(t)}=1+aS_F^\alpha(t)+\dfrac{\left(aS_F^\alpha(t)\right)^2}{2!}+\dfrac{\left(aS_F^\alpha(t)\right)^3}{3!}+\ldots.\end{eqnarray}
    Using the definition \begin{eqnarray}\label{st7}
        \mathcal{G}(S_F^\alpha(v))=1+aS_F^\alpha(v)+\left(aS_F^\alpha(v)\right)^2+\left(aS_F^\alpha(v)\right)^3+\ldots.
    \end{eqnarray} The series (\ref{st7}) converges for $\left|aS_F^\alpha(v)\right|<\left(av\right)^\alpha<1$. So, let us take $M=1/|a|^\alpha$. Then the Sumudu transform of $g(S_F^\alpha(t))$ is \begin{eqnarray}\label{st8}
        \mathscr{S}(S_F^\alpha(v))=\mathcal{G}^\ast(S_F^\alpha(v))=\dfrac{1}{1-aS_F^\alpha(v)}\;\text{ for }0\leqslant v<M=\dfrac{1}{|a|^\alpha}.
    \end{eqnarray}
\end{example}
    
\begin{definition}[The fractal Sumudu transform]\cite{Ali2}
    The fractal Sumudu transform of any function $f(S_F^\alpha(t))$ is defined by$$f(S_F^\alpha(t))\overset{\mathcal{S}_F^\alpha}{\longmapsto}\mathscr{S}\left(S_F^\alpha(v)\right)=\dfrac{1}{S_F^\alpha(v)}\int_{0}^{\infty}e^{-\dfrac{S_F^\alpha(t)}{S_F^\alpha(v)}}f(S_F^\alpha(t))d_F^\alpha t\equiv\mathcal{S}^\alpha_F[f](S_F^\alpha(t)),$$i.e.,\begin{eqnarray}\label{st9}
        \mathscr{S}(S_F^\alpha(v))=\dfrac{1}{S_F^\alpha(v)}\int_{0}^{\infty}e^{-\dfrac{S_F^\alpha(t)}{S_F^\alpha(v)}}f(S_F^\alpha(t))d_F^\alpha t,
    \end{eqnarray}if the integral exists.
\end{definition}
\begin{example}
    To find the Sumudu transform of $\sin(aS^\alpha_F(t))$ and $\cos(aS^\alpha_F(t))$, we use $$e^{iaS^\alpha_F(t)}=\cos(aS^\alpha_F(t))+i\sin(aS^\alpha_F(t)),$$ so that \begin{eqnarray}
        &&\mathscr{S}\left[\cos(aS^\alpha_F(t))+i\sin(aS^\alpha_F(t))\right](S_F^\alpha(v))=\dfrac{1}{1-iaS^\alpha_F(v)}\nonumber\\&\implies&\mathscr{S}\left[\cos(aS^\alpha_F(t))\right](S_F^\alpha(v))+i\,\mathscr{S}\left[\sin(aS^\alpha_F(t))\right](S_F^\alpha(v))=\dfrac{1+iaS^\alpha_F(v)}{1+\left(aS^\alpha_F(v)\right)^2}.\label{st91}
    \end{eqnarray} Separating real and imaginary parts of \normalfont{(\ref{st91})}, we get $$\mathscr{S}\left[\cos(aS^\alpha_F(t))\right](S_F^\alpha(v))=\dfrac{1}{1+\left(aS^\alpha_F(v)\right)^2},$$ and $$\mathscr{S}\left[\sin(aS^\alpha_F(t))\right](S_F^\alpha(v))=\dfrac{aS^\alpha_F(v)}{1+\left(aS^\alpha_F(v)\right)^2}.$$ 
\end{example} 

\begin{theorem}\cite{Ali2}
    If the Sumudu transform of $f(S_F^\alpha(t))$ is $\mathscr{S}(S_F^\alpha(v))$, then the Sumudu transform of $D_F^\alpha f(S_F^\alpha(t))$ is \begin{eqnarray}\label{st10}
        \dfrac{\mathscr{S}(S_F^\alpha(v))-\mathscr{S}(0)}{S_F^\alpha(v)}=\dfrac{\mathscr{S}(S_F^\alpha(v))-f(0)}{S_F^\alpha(v)}.
    \end{eqnarray}
\end{theorem}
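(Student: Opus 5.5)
We prove the formula from the integral definition \eqref{st9} by fractal integration by parts, and cross-check it on power series using \eqref{st2}--\eqref{st4}. The basic tools from fractal calculus are the chain rule $D_F^\alpha e^{-S_F^\alpha(t)/S_F^\alpha(v)}=-\frac{1}{S_F^\alpha(v)}e^{-S_F^\alpha(t)/S_F^\alpha(v)}$ (since $D_F^\alpha S_F^\alpha(t)=1$ on $F$), the product rule for $D_F^\alpha$, and the fractal fundamental theorem of calculus $\int_a^b D_F^\alpha g\,d_F^\alpha t=g(b)-g(a)$ for $F$-differentiable $g$.

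\emph{Step 1.} Write
\[
\mathcal{S}_F^\alpha[D_F^\alpha f](S_F^\alpha(v))=\frac{1}{S_F^\alpha(v)}\int_0^\infty e^{-S_F^\alpha(t)/S_F^\alpha(v)}\,D_F^\alpha f(S_F^\alpha(t))\,d_F^\alpha t .
\]

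\emph{Step 2.} Apply the product rule to $g(t)=e^{-S_F^\alpha(t)/S_F^\alpha(v)}f(S_F^\alpha(t))$ and integrate over $[0,\infty)$. The boundary term is $\lim_{t\to\infty}g(t)-g(0)=-f(0)$: here $S_F^\alpha(0)=0$ with the base point $l=0$, and we assume $f$ is of fractal exponential order so that $g(t)\to0$ for $v$ in the admissible range $0\leqslant v<M$. The remaining integral equals
\[
\frac{1}{S_F^\alpha(v)}\int_0^\infty e^{-S_F^\alpha(t)/S_F^\alpha(v)}f\,d_F^\alpha t=\mathscr{S}(S_F^\alpha(v)).
\]
Hence the integral in Step 1 equals $-f(0)+\mathscr{S}(S_F^\alpha(v))$. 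Dividing by $S_F^\alpha(v)$ gives the right-hand side of \eqref{st10}.

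\emph{Step 3.} Identify $\mathscr{S}(0)=f(0)$. This follows from \eqref{st2}: setting $v=0$ kills every term except $a_0=f(0)$. Alternatively, the kernel $\frac{1}{S}e^{-S_F^\alpha(t)/S}$ concentrates at $t=0$ as $S\to0$.

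\emph{Consistency check via series.} If $f=\sum a_i S_F^\alpha(t)^i$, then $D_F^\alpha f=\sum_{i\ge1} i\,a_i S_F^\alpha(t)^{i-1}$. By \eqref{st2} its transform is
\[
\sum_{i\ge1} i!\,a_i S_F^\alpha(v)^{i-1}=\frac{\mathcal{G}(S_F^\alpha(v))-a_0}{S_F^\alpha(v)},
\]
which is exactly \eqref{st10}.

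The main obstacle is justifying the vanishing of the boundary term at infinity and the interchange of limit and integral in the fractal setting. We will handle this by imposing a growth hypothesis $|f(S_F^\alpha(t))|\le Ce^{cS_F^\alpha(t)}$ with $cS_F^\alpha(v)<1$, which matches the convergence domain $v<M$ used for the exponential example. The series argument sidesteps this issue, but only within the radius of convergence.
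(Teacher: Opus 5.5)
Your argument is correct. The paper gives no proof of this theorem and simply cites \cite{Ali2}. Within the framework it recalls, the transform of a power series $f=\sum a_iS_F^\alpha(t)^i$ is \emph{defined} termwise by \eqref{st2}--\eqref{st4}, so the natural proof is exactly the computation you relegate to a consistency check: $D_F^\alpha f=\sum_{i\geqslant1}ia_iS_F^\alpha(t)^{i-1}$ is sent to $\sum_{i\geqslant1}i!\,a_iS_F^\alpha(v)^{i-1}=(\mathcal{G}(S_F^\alpha(v))-a_0)/S_F^\alpha(v)$, and $\mathscr{S}(0)=\mathcal{G}(0)=a_0=f(0)$ is immediate. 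Your main route, integration by parts in the integral form \eqref{st9}, is genuinely different. Its gain is generality: it covers any $f$ of fractal exponential order with $F$-integrable $D_F^\alpha f$, not only functions expandable in powers of $S_F^\alpha(t)$. It also works directly with \eqref{st9}, the form the paper ties to the Laplace transform in Lemma~\ref{rsl0}. Its cost is analysis the series route never needs. First, you need the fractal product rule and fundamental theorem on an unbounded interval. Second, you need a growth bound to kill $\lim_{t\to\infty}e^{-S_F^\alpha(t)/S_F^\alpha(v)}f(S_F^\alpha(t))$; this also tacitly requires $S_F^\alpha(t)\to\infty$, an assumption the framework makes silently (for bounded $F$ the staircase is eventually constant and this boundary term survives). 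Third, you need a separate limiting argument for $\mathscr{S}(0)=f(0)$, since \eqref{st9} is undefined at $S_F^\alpha(v)=0$. Your kernel-concentration remark supplies this, using $\frac{1}{S}\int_0^\infty e^{-S_F^\alpha(t)/S}\,d_F^\alpha t=1$ and $F$-continuity of $f$ at $0$. Conversely, the series argument has no boundary terms but is confined to the range $0\leqslant v<M$ where $\mathcal{G}$ converges. It also proves the statement for the integral transform \eqref{st9} only once one grants the moment identity $\frac{1}{S}\int_0^\infty e^{-S_F^\alpha(t)/S}S_F^\alpha(t)^k\,d_F^\alpha t=k!\,S^k$ and termwise integration.
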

\begin{theorem}\cite{Ali2}
    The Sumudu transform of $$\int_0^{S_F^\alpha(t)}f(S_F^\alpha(t))d_F^\alpha(t)$$ is given by \begin{eqnarray}\label{st11}
        \mathscr{S}\left[\int_0^{S_F^\alpha(t)}f(S_F^\alpha(t))d_F^\alpha(t)\right](S_F^\alpha(v))=S_F^\alpha(v)\mathscr{S}[f](S_F^\alpha(v)).
    \end{eqnarray}
\end{theorem}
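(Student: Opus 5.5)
The plan is to reduce the statement to the derivative rule (\ref{st10}) via the fractal fundamental theorem of calculus. Set
\[
g(S_F^\alpha(t))=\int_0^{S_F^\alpha(t)}f(S_F^\alpha(t))\,d_F^\alpha t .
\]
This is the fractal primitive of $f$ vanishing at the origin. By the fractal fundamental theorem of calculus of \cite{Parvate2003}, we have $D_F^\alpha g(S_F^\alpha(t))=f(S_F^\alpha(t))$ on the support, and $g(0)=0$.

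First, I will write $\mathscr{G}(S_F^\alpha(v))$ for the Sumudu transform of $g$. Applying the derivative theorem (\ref{st10}) to $g$ gives
\[
\mathscr{S}[f](S_F^\alpha(v))=\mathscr{S}[D_F^\alpha g](S_F^\alpha(v))=\frac{\mathscr{G}(S_F^\alpha(v))-g(0)}{S_F^\alpha(v)}=\frac{\mathscr{G}(S_F^\alpha(v))}{S_F^\alpha(v)}.
\]
Multiplying through by $S_F^\alpha(v)$ then yields (\ref{st11}) directly.

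Next, as a check, and in case one prefers to avoid assuming the fractal FTC, I would verify the identity from the definition (\ref{st9}). The idea is to write the double integral
\[
\frac{1}{S_F^\alpha(v)}\int_0^\infty e^{-S_F^\alpha(t)/S_F^\alpha(v)}\int_0^{t} f(S_F^\alpha(\tau))\,d_F^\alpha\tau\,d_F^\alpha t
\]
and interchange the order of integration (Fubini for the $F^\alpha$-measure). The inner integral then becomes
\[
\int_\tau^\infty e^{-S_F^\alpha(t)/S_F^\alpha(v)}\,d_F^\alpha t = S_F^\alpha(v)\,e^{-S_F^\alpha(\tau)/S_F^\alpha(v)},
\]
which follows from the chain rule $D_F^\alpha e^{cS_F^\alpha(t)}=c\,e^{cS_F^\alpha(t)}$. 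Substituting this back produces $S_F^\alpha(v)\,\mathscr{S}[f]$. Alternatively, one can check the identity on power series using (\ref{st2}). Integration sends $a_iS_F^\alpha(t)^i$ to $a_iS_F^\alpha(t)^{i+1}/(i+1)$. Under $\mathcal{G}$ this gives $a_i\, i!\,S_F^\alpha(v)^{i+1}$, which is exactly $S_F^\alpha(v)$ times the $i$-th term of $\mathcal{G}$ for $f$.

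The main obstacle is justification rather than computation. The derivative rule must be applicable to $g$, which needs $g$ to be $F$-differentiable with transformable derivative; this holds if $f$ is $F$-continuous and of fractal exponential order. The direct route instead requires justifying the interchange of integrals and the vanishing of boundary terms at infinity. I would handle both by assuming, as the paper implicitly does, that $f$ has exponential order in $S_F^\alpha(t)$ and that $v$ is in the convergence range.
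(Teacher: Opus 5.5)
Your proof is correct. The paper gives no argument of its own here; the result is simply quoted from \cite{Ali2} right after the derivative rule \eqref{st10}, so the natural comparison is with that framework. Your main route is exactly the derivation that framework suggests: apply \eqref{st10} to the primitive $g$, using the fractal fundamental theorem of calculus to get $D_F^\alpha g=f$ on $F$ and $g(0)=0$. Your series check likewise matches the series-based definition \eqref{st2}--\eqref{st4}, since the coefficient $a_i/(i+1)$ of $S_F^\alpha(t)^{i+1}$ gets weight $(i+1)!$ and produces $i!\,a_i\,S_F^\alpha(v)^{i+1}$.

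The only tacit hypotheses are ones the paper already makes. First, $S_F^\alpha(t)\to\infty$ together with exponential order of $f$; this is needed both for \eqref{st10} and for your tail integral $\int_\tau^\infty e^{-S_F^\alpha(t)/S_F^\alpha(v)}\,d_F^\alpha t=S_F^\alpha(v)e^{-S_F^\alpha(\tau)/S_F^\alpha(v)}$. Second, for the series check only, $S_F^\alpha(0)=0$, i.e.\ the choice $l=0$ in \eqref{ms5}.
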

\begin{definition}\cite{Ali3}
        The fractal Laplace transform of any function $f({S_F}^\alpha(t))\;(t\geqslant 0)$ is defined by $$f({S_F}^\alpha(t))\overset{\mathcal{L}_F^\alpha}{\longmapsto}\mathfrak{L}\left({S_F}^\alpha(v)\right)=\int_0^\infty e^{-{S_F}^\alpha(v)S_F^\alpha(t)}f({S_F}^\alpha(t))d_F^\alpha t\equiv\mathcal{L}_F^\alpha[f](S_F^\alpha(t)),$$ i.e., \begin{eqnarray}\label{rsl1}
            \mathfrak{L}(S_F^\alpha(v))=\int_0^\infty e^{-S_F^\alpha(v)S_F^\alpha(t)}f(S_F^\alpha(t))d_F^\alpha t,
        \end{eqnarray}if the integral exists. 
    \end{definition}
    Now, the following Lemma will give a relationship between the fractal Sumudu transform and the fractal Laplace transform.
    \begin{lemma}\label{rsl0}
        Let $\mathscr{S}(S_F^\alpha(v))$ and $\mathfrak{L}(S_F^\alpha(v))$ denote the fractal Sumudu transform and the fractal Laplace transform of a function $f(S_F^\alpha(t))$, respectively. Then \begin{eqnarray}\label{rsl2}
            \mathfrak{L}[f](S_F^\alpha(v))=\dfrac{1}{S_F^\alpha(v)}\mathscr{S}[f]\left(\dfrac{1}{S_F^\alpha(v)}\right).
        \end{eqnarray} Equivalently, \begin{eqnarray}\label{rsl3}
            \mathscr{S}[f](S_F^\alpha(v))=\dfrac{1}{S_F^\alpha(v)}\mathfrak{L}[f]\left(\dfrac{1}{S_F^\alpha(v)}\right).
        \end{eqnarray}
    \end{lemma}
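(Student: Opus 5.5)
Both identities follow by comparing the integral definitions (\ref{st9}) and (\ref{rsl1}) after replacing the argument $S_F^\alpha(v)$ by its reciprocal. The idea is the classical relation between the Sumudu and Laplace transforms. The only structural input is that the integrand depends on $t$ through $S_F^\alpha(t)$ alone. The transform variable $S_F^\alpha(v)$ enters each kernel only as a positive real parameter, which I will write as $u=S_F^\alpha(v)>0$ for $v>0$.

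First I rewrite both transforms as functions of the scalar $u$:
\begin{eqnarray*}
\mathscr{S}[f](u)=\dfrac{1}{u}\int_0^\infty e^{-S_F^\alpha(t)/u}f(S_F^\alpha(t))\,d_F^\alpha t,\qquad \mathfrak{L}[f](u)=\int_0^\infty e^{-u\,S_F^\alpha(t)}f(S_F^\alpha(t))\,d_F^\alpha t .
\end{eqnarray*}
Then I evaluate the Sumudu transform at $1/u$. The factor in front becomes $u$ and the kernel becomes $e^{-uS_F^\alpha(t)}$, so
\begin{eqnarray*}
\mathscr{S}[f]\left(\dfrac{1}{u}\right)=u\int_0^\infty e^{-uS_F^\alpha(t)}f(S_F^\alpha(t))\,d_F^\alpha t=u\,\mathfrak{L}[f](u).
\end{eqnarray*}
Dividing by $u$ gives (\ref{rsl2}). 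For (\ref{rsl3}), I evaluate the Laplace transform at $1/u$, which gives $\mathfrak{L}[f](1/u)=\int_0^\infty e^{-S_F^\alpha(t)/u}f\,d_F^\alpha t=u\,\mathscr{S}[f](u)$. Alternatively, I substitute $u\mapsto 1/u$ in (\ref{rsl2}). Either way, no change of variables inside the $F^\alpha$-integral is needed.

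The main point needing care is interpretive rather than computational. The notation $\mathscr{S}[f](1/S_F^\alpha(v))$ must be read as the transform evaluated at parameter value $1/u$, not at some $v'$ with $S_F^\alpha(v')=1/u$. If one insisted on the latter, I would invoke that $S_F^\alpha$ is continuous, nondecreasing and unbounded on $[0,\infty)$ for the fractal sets under consideration, so such a $v'$ exists. Only the value $S_F^\alpha(v')$ matters, so the choice of $v'$ is irrelevant.

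I will also remark that the two integrals exist simultaneously, because they are literally the same integral. Hence the identity holds on the common domain of convergence, and we must assume $S_F^\alpha(v)\neq 0$.
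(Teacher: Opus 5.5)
Your proof is correct, but it takes a different and more direct route than the paper. The paper starts from (\ref{rsl1}) and changes variables inside the $F^\alpha$-integral, setting $S_F^\alpha(v)S_F^\alpha(t)=S_F^\alpha(l)$ and $S_F^\alpha(v)\,d_F^\alpha t=d_F^\alpha l$. This gives $\frac{1}{S_F^\alpha(v)}\int_0^\infty e^{-S_F^\alpha(l)}f\bigl(S_F^\alpha(l)/S_F^\alpha(v)\bigr)\,d_F^\alpha l$. The paper then reads this last integral as $\mathscr{S}[f]\bigl(1/S_F^\alpha(v)\bigr)$. That step tacitly uses the scaled-argument form $\mathscr{S}[f](u)=\int_0^\infty e^{-S_F^\alpha(l)}f\bigl(uS_F^\alpha(l)\bigr)\,d_F^\alpha l$ of the Sumudu transform, which itself comes from (\ref{st9}) by a second substitution of the same kind. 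The paper then leaves (\ref{rsl3}) to ``a similar manner''.

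You avoid the substitution entirely. Evaluating (\ref{st9}) at the reciprocal parameter makes the two kernels coincide, so both identities reduce to the fact that the two sides are the same integral up to the factor $u$. Your route is more robust in three ways:
\begin{itemize}
\item it needs no change-of-variables rule for the $F^\alpha$-integral, which would have to be justified (e.g.\ via the correspondence between $F^\alpha$-integrals and Riemann integrals in the variable $S_F^\alpha$);
\item it needs no point $l$ with $S_F^\alpha(l)=S_F^\alpha(v)S_F^\alpha(t)$ for every $t$, which tacitly requires $S_F^\alpha$ to map onto $[0,\infty)$;
\item it shows at once that both sides converge for exactly the same parameter values.
\end{itemize}
The paper's route does produce the alternative integral representation of $\mathscr{S}$ along the way. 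For the lemma itself, however, your argument is the cleaner one.
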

    \begin{proof}
        We have \begin{eqnarray}\label{rsl4}
            \mathfrak{L}[f](S_F^\alpha(v))&=&\int_0^\infty e^{-S^\alpha_F(v)S_F^\alpha(t)}f(S_F^\alpha(t))d_F^\alpha t.
        \end{eqnarray}
        Take $S^\alpha_F(v)S_F^\alpha(t)=S_F^\alpha(l)$, then $S^\alpha_F(v)d_F^\alpha(t)=d_F^\alpha(l)$ so that \begin{eqnarray}\label{rsl5}
            \mathfrak{L}[f](S_F^\alpha(v))&=&\dfrac{1}{S^\alpha_F(v)}\int_0^\infty e^{-S_F^\alpha(l)}f\left(\dfrac{S_F^\alpha(l)}{S^\alpha_F(v)}\right)d_F^\alpha(l)\nonumber\\&=&\dfrac{1}{S^\alpha_F(v)}\mathscr{S}[f]\left(\dfrac{1}{S_F^\alpha(v)}\right).
        \end{eqnarray}
        The relation (\ref{rsl3}) can be shown in the similar manner.
    \end{proof}
\subsection{Sumudu transform for some non-local fractal derivatives}

    \begin{definition}[Fractal Gamma function]\cite{Ali3}
        The gamma function with fractal support is defined as \begin{eqnarray}\label{gamma1}
            \Gamma_F^\alpha(x)=\int_0^\infty e^{-S_F^\alpha(t)}S_F^\alpha(t)^{S_F^\alpha(x)-1}d_F^\alpha t,
        \end{eqnarray}where $$e^{-S_F^\alpha(t)}=F\_\lim_{n\to\infty}\left(1-\dfrac{S_F^\alpha(t)}{n}\right)^n.$$
    \end{definition}
    \begin{flushleft}
        {\bf Notation}: We shall use $C_F^\alpha[a,b]$ to denote the family of $\alpha$-order differentiable functions on $[a,b]$.
    \end{flushleft}
    \begin{definition}\cite{Ali3}
    If $f\in C_F^\alpha[a,b]$ and $\beta>0$, then the Riemann-Liouville fractal integral of order $\beta$ is defined as \begin{eqnarray}
        ^{}_a{\mathcal{I}}_x^\beta f(S_F^\alpha(x)):=\dfrac{1}{\Gamma_F^\alpha(\beta)}\int_{S_F^\alpha(a)}^{S_F^\alpha(x)}\dfrac{f(S_F^\alpha(t))}{(S_F^\alpha(x)-S_F^\alpha(t))^{\alpha-\beta}}d_F^\alpha t,\;\;S_F^\alpha(x)>S_F^\alpha(a).
    \end{eqnarray}
    \end{definition}
    \begin{theorem}
        Let $f\in C_F^\alpha[a,b]$ and $\beta>0$, then the fractal Laplace transform of ${^{}_a}{\mathcal{I}}_x^\beta f(x)$ is \cite{Ali4}\begin{eqnarray}\label{rli1}
            \mathfrak{L}\left[^{}_0{\mathcal{I}}_x^\beta f\right](S_F^\alpha(v))=\dfrac{\mathfrak{L}[f](S_F^\alpha(v))}{S_F^\alpha(v)^\beta}.
        \end{eqnarray}and the corresponding fractal Sumudu transform is\begin{eqnarray}\label{rli2}
            \mathscr{S}\left[^{}_0{\mathcal{I}}_x^\beta f\right](S_F^\alpha(v))=S_F^\alpha(v)^\beta\mathscr{S}[f](S_F^\alpha(v)).
        \end{eqnarray}
    \end{theorem}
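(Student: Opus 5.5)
The plan is to prove the Laplace formula \eqref{rli1} first, using the fractal convolution theorem, and then obtain the Sumudu formula \eqref{rli2} directly from Lemma~\ref{rsl0}.

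\textbf{Step 1: write the integral as a convolution.} With $a=0$, the Riemann--Liouville fractal integral is a fractal convolution:
\[
{^{}_0}\mathcal{I}_x^\beta f = \frac{1}{\Gamma_F^\alpha(\beta)}\, \big(g_\beta * f\big)(S_F^\alpha(x)),
\]
where the kernel is $g_\beta(S_F^\alpha(t)) = S_F^\alpha(t)^{\beta-\alpha}$, read in the paper's normalization.

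\textbf{Step 2: apply the convolution theorem.} I take for granted the fractal convolution theorem for $\mathcal{L}_F^\alpha$, which follows from Fubini on the iterated $F^\alpha$-integral. I would verify it by exchanging the order of integration and substituting $S_F^\alpha(t)=S_F^\alpha(\tau)+S_F^\alpha(u)$. This gives
\[
\mathfrak{L}\big[{^{}_0}\mathcal{I}_x^\beta f\big] = \frac{1}{\Gamma_F^\alpha(\beta)}\, \mathfrak{L}[g_\beta]\, \mathfrak{L}[f].
\]

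\textbf{Step 3: compute the kernel's transform.} This step is the main obstacle. Substitute $S_F^\alpha(v)S_F^\alpha(t)=S_F^\alpha(l)$, exactly as in the proof of Lemma~\ref{rsl0}. The integral $\int_0^\infty e^{-S_F^\alpha(v)S_F^\alpha(t)}S_F^\alpha(t)^{\beta-\alpha}\,d_F^\alpha t$ then reduces to the fractal Gamma integral \eqref{gamma1}, with an overall factor that is a power of $S_F^\alpha(v)$. The aim is
\[
\mathfrak{L}[g_\beta](S_F^\alpha(v)) = \frac{\Gamma_F^\alpha(\beta)}{S_F^\alpha(v)^\beta}.
\]
The difficulty is bookkeeping. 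The exponent $S_F^\alpha(x)-1$ in \eqref{gamma1} and the kernel exponent $\beta-\alpha$ must be matched so that the Gamma factor cancels the prefactor $1/\Gamma_F^\alpha(\beta)$ and the leftover power is exactly $S_F^\alpha(v)^{-\beta}$. I would fix this by checking the case $f\equiv\chi_F$, where the answer is forced by consistency with \eqref{st11} when $\beta=\alpha$ corresponds to a single integration. If necessary I would identify $\beta$ with the $S_F^\alpha$-scaled exponent used in \eqref{gamma1}. Inserting the kernel transform into Step 2 gives \eqref{rli1}.

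\textbf{Step 4: deduce the Sumudu formula.} Use \eqref{rsl3}, namely $\mathscr{S}[h](S_F^\alpha(v))=\frac{1}{S_F^\alpha(v)}\mathfrak{L}[h]\big(1/S_F^\alpha(v)\big)$, with $h={^{}_0}\mathcal{I}_x^\beta f$. By \eqref{rli1} the right-hand side equals
\[
\frac{1}{S_F^\alpha(v)}\, S_F^\alpha(v)^\beta\, \mathfrak{L}[f]\big(1/S_F^\alpha(v)\big).
\]
Applying \eqref{rsl3} again, this time to $f$, turns this into $S_F^\alpha(v)^\beta\,\mathscr{S}[f](S_F^\alpha(v))$, which is \eqref{rli2}.

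As a consistency check, when $\beta$ corresponds to one fractal integration, \eqref{rli2} should reduce to \eqref{st11}.
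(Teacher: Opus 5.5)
Your Step 4 is exactly the paper's proof, and it is correct. The paper does not prove \eqref{rli1} at all; it imports it from \cite{Ali4} and then obtains \eqref{rli2} by inserting \eqref{rli1} into Lemma~\ref{rsl0}, as you do.

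The gap is in your own derivation of \eqref{rli1}, specifically Step 3. You state it only as an ``aim'', and with the paper's definitions as written it cannot be reached. The substitution $S_F^\alpha(v)S_F^\alpha(t)=S_F^\alpha(l)$, with the linear rule $S_F^\alpha(v)\,d_F^\alpha t=d_F^\alpha l$ from Lemma~\ref{rsl0}, gives
\[
\mathfrak{L}[g_\beta](S_F^\alpha(v))=S_F^\alpha(v)^{-(\beta-\alpha+1)}\int_0^\infty e^{-S_F^\alpha(l)}S_F^\alpha(l)^{\beta-\alpha}\,d_F^\alpha l .
\]
This causes two mismatches:
\begin{itemize}
\item The power of $S_F^\alpha(v)$ is $-(\beta-\alpha+1)$, not $-\beta$, unless $\alpha=1$.
\item The remaining integral is not $\Gamma_F^\alpha(\beta)$ as defined in \eqref{gamma1}, since that exponent is $S_F^\alpha(\beta)-1$ rather than $\beta-\alpha$.
\end{itemize}
Your proposed consistency check exposes the mismatch rather than repairing it. At $\beta=\alpha$ the kernel is $1$, so \eqref{st11} gives $\mathscr{S}[{}_0\mathcal{I}_x^\alpha f]=S_F^\alpha(v)\,\mathscr{S}[f](S_F^\alpha(v))/\Gamma_F^\alpha(\alpha)$. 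But \eqref{rli2} asserts $S_F^\alpha(v)^\alpha\,\mathscr{S}[f](S_F^\alpha(v))$. ``Identifying $\beta$ with the scaled exponent if necessary'' is a change of definitions, not a proof step.

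You have two ways to repair this.
\begin{itemize}
\item Do what the paper does and treat \eqref{rli1} as a cited result. Then your Step 4 alone is the proof.
\item Fix the normalization explicitly before computing: take the kernel $(S_F^\alpha(x)-S_F^\alpha(t))^{\beta-1}$ and $\Gamma_F^\alpha(\beta)=\int_0^\infty e^{-S_F^\alpha(t)}S_F^\alpha(t)^{\beta-1}\,d_F^\alpha t$. The same substitution then gives exactly $\mathfrak{L}[g_\beta](S_F^\alpha(v))=\Gamma_F^\alpha(\beta)/S_F^\alpha(v)^\beta$. The convolution theorem \eqref{c2} then yields \eqref{rli1}; the paper already states it, so no Fubini argument is needed. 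In this normalization a single integration corresponds to $\beta=1$, and \eqref{rli2} agrees with \eqref{st11}.
\end{itemize}
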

    \begin{proof}
        (\ref{rli2}) can be proved by using the Lemma \ref{rsl0}.
    \end{proof}

    \begin{definition}\cite{Ali3}
        The fractal Mittag-Leffler function with one parameter is \begin{eqnarray}\label{ml1}
            E_F^\alpha(S_F^\alpha(t))=\sum_{j=0}^\infty\dfrac{S_F^\alpha(t)^j}{\Gamma_F^\alpha(\eta j+a)},\;\eta>0,\;a\in\mathbb{R}.
        \end{eqnarray}
    \end{definition}
    \begin{theorem}
        The fractal Sumudu transform of $E_F^\alpha(t)$ is \begin{eqnarray}\label{ml2}
            \mathscr{S}\left[E_F^\alpha\right](S_F^\alpha(v))=\sum_{j=0}^\infty\dfrac{j!S_F^\alpha(v)^j}{\Gamma_F^\alpha(\eta j+a)}.
        \end{eqnarray}
    \end{theorem}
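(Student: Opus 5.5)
The plan is to prove the formula term by term, using the power-series characterization of the fractal Sumudu transform from (\ref{st1})--(\ref{st4}), or equivalently the integral definition (\ref{st9}) applied to each monomial. First I would compute the transform of a single power $S_F^\alpha(t)^j$. From the series definition (\ref{st2}), the coefficient $a_j$ of $S_F^\alpha(t)^j$ is multiplied by $j!$, so
\[
\mathscr{S}\left[S_F^\alpha(t)^j\right](S_F^\alpha(v))=j!\,S_F^\alpha(v)^j .
\]
As a consistency check I would verify this through (\ref{st9}). Substituting $S_F^\alpha(t)=S_F^\alpha(v)S_F^\alpha(l)$, exactly as in the proof of Lemma \ref{rsl0}, gives $S_F^\alpha(v)^j\int_0^\infty e^{-S_F^\alpha(l)}S_F^\alpha(l)^j\,d_F^\alpha l$. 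This integral is the fractal Gamma integral (\ref{gamma1}) and should evaluate to $j!$ for integer $j$. It is consistent with the exponential example (\ref{st8}), whose coefficients $1/j!$ turn into $1$.

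Next, I would expand $E_F^\alpha(S_F^\alpha(t))$ by (\ref{ml1}) as $\sum_j a_j S_F^\alpha(t)^j$ with $a_j=1/\Gamma_F^\alpha(\eta j+a)$. Applying the definition (\ref{st2}) directly then yields $\mathcal{G}(S_F^\alpha(v))=\sum_j j!\,a_j S_F^\alpha(v)^j$, which is exactly (\ref{ml2}). If the integral form (\ref{st9}) is used instead, the same result follows from linearity once the sum and the integral are interchanged.

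The main obstacle is justifying that interchange, or equivalently the convergence required in (\ref{st4}). The Gamma growth of $\Gamma_F^\alpha(\eta j+a)$ makes the original series entire. However, the transformed series $\sum j!\,S_F^\alpha(v)^j/\Gamma_F^\alpha(\eta j+a)$ converges only under suitable conditions: for $\eta\geq 1$ it converges for small $S_F^\alpha(v)$, in the spirit of the choice of $M$ in the exponential example. I would therefore state the result for $0\leqslant v<M$, with $M$ chosen so that the series converges.

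On that range I would justify the interchange by a dominated (or monotone) convergence argument for the fractal integral. The key bound is $\int_0^\infty e^{-S_F^\alpha(t)/S_F^\alpha(v)}\sum_j |a_j|S_F^\alpha(t)^j\,d_F^\alpha t<\infty$, which reduces termwise to the same Gamma computation as above. For $\eta<1$ I would simply interpret (\ref{ml2}) as a formal or asymptotic identity, as the paper does for general power series in (\ref{st2}).
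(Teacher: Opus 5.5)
Your proposal is correct and follows the paper's route. The paper's proof only says the formula is obvious from the definition: one applies the series rule (\ref{st2}) to the expansion (\ref{ml1}), multiplying each coefficient $1/\Gamma_F^\alpha(\eta j+a)$ by $j!$. Your convergence caveats go beyond what the paper provides and are accurate. The transformed series converges near $0$ only when $\eta\geq 1$: for all $S_F^\alpha(v)$ when $\eta>1$, and for $S_F^\alpha(v)<1$ when $\eta=1$. For $\eta<1$ it is merely formal. Your dominated-convergence justification for the integral form (\ref{st9}) is likewise sound.
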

    \begin{proof}
        It is obvious from the definition of Sumudu transform.
    \end{proof}
    \begin{lemma}\label{l1}
        For the Mittag-Leffler function $E_F^\alpha(x)$, the following results hold:\begin{enumerate}[{\normalfont (i)}]
            \item $\mathscr{S}\left[E_F^\alpha\left(-aS_F^\alpha(t)^\beta\right)\right](S_F^\alpha(v))=\dfrac{1}{1+aS_F^\alpha(v)^\beta}$,
            \item $\mathscr{S}\left[1-E_F^\alpha(-aS_F^\alpha(t)^\beta)\right](S_F^\alpha(v))=\dfrac{aS_F^\alpha(v)^\beta}{1+aS_F^\alpha(v)^\beta}$.
        \end{enumerate}
    \end{lemma}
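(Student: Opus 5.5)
We will prove (i) by computing the fractal Laplace transform of $E_F^\alpha\bigl(-aS_F^\alpha(t)^\beta\bigr)$ and then passing to the Sumudu transform through Lemma \ref{rsl0}. Part (ii) then follows from linearity and the characteristic-function example.

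Here the Mittag-Leffler function is understood in the two-parameter form with $\eta=\beta$ and $a=1$ in (\ref{ml1}); we write $c$ for the constant in the statement to avoid a clash with the parameter $a$ of (\ref{ml1}). Thus
$$E_F^\alpha\bigl(-cS_F^\alpha(t)^\beta\bigr)=\sum_{j=0}^\infty\frac{(-c)^jS_F^\alpha(t)^{\beta j}}{\Gamma_F^\alpha(\beta j+1)}.$$
The first step is to record the fractal power rule
$$\mathfrak{L}\bigl[S_F^\alpha(t)^{\mu}\bigr](S_F^\alpha(v))=\frac{\Gamma_F^\alpha(\mu+1)}{S_F^\alpha(v)^{\mu+1}}.$$
It is obtained from the definition (\ref{rsl1}) by the substitution $S_F^\alpha(v)S_F^\alpha(t)=S_F^\alpha(l)$, exactly as in the proof of Lemma \ref{rsl0}, followed by the definition (\ref{gamma1}) of $\Gamma_F^\alpha$. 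Applying Lemma \ref{rsl0} (\ref{rsl3}) gives the Sumudu version
$$\mathscr{S}\bigl[S_F^\alpha(t)^{\mu}\bigr](S_F^\alpha(v))=\Gamma_F^\alpha(\mu+1)\,S_F^\alpha(v)^{\mu}.$$
For integer $\mu$ this is consistent with the $k!$ weights in (\ref{st2}).

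The second step is to apply this rule term by term to the series. The Gamma factors cancel, and we obtain
$$\mathscr{S}\bigl[E_F^\alpha(-cS_F^\alpha(t)^\beta)\bigr](S_F^\alpha(v))=\sum_{j=0}^\infty\bigl(-cS_F^\alpha(v)^\beta\bigr)^j=\frac{1}{1+cS_F^\alpha(v)^\beta},$$
valid for $|c|\,S_F^\alpha(v)^\beta<1$. This mirrors the computation (\ref{st7})--(\ref{st8}) for the exponential, which is the case $\beta=1$. Alternatively, one can work on the Laplace side, sum to $S_F^\alpha(v)^{\beta-1}/(S_F^\alpha(v)^\beta+c)$, and convert with (\ref{rsl3}); this yields the same expression.

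For (ii), the Sumudu transform of the constant $1$ is $1$: take $a_0=1$ in (\ref{st1})--(\ref{st3}), or rescale the characteristic-function example. By linearity of the integral (\ref{st9}),
$$\mathscr{S}\bigl[1-E_F^\alpha(-cS_F^\alpha(t)^\beta)\bigr](S_F^\alpha(v))=1-\frac{1}{1+cS_F^\alpha(v)^\beta}=\frac{cS_F^\alpha(v)^\beta}{1+cS_F^\alpha(v)^\beta}.$$

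The main obstacle is justifying the interchange of the series and the fractal integral in the second step. We would handle it by dominated convergence. For $|c|S_F^\alpha(v)^\beta<1$, the termwise transforms form a convergent geometric series. Moreover, the absolute series $\sum_j |c|^jS_F^\alpha(t)^{\beta j}/\Gamma_F^\alpha(\beta j+1)$ multiplied by $e^{-S_F^\alpha(t)/S_F^\alpha(v)}$ is integrable, since its termwise integral is $\sum_j(|c|S_F^\alpha(v)^\beta)^j<\infty$. This gives the identity on $0\le v<M$ for a suitable $M$, and analytic continuation extends it as a closed form, in the spirit of (\ref{st4}).
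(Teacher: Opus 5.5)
The paper gives no proof of Lemma~\ref{l1}. It is stated, evidently imported from the cited fractal-calculus literature, and then used to invert the transforms in the Caputo and wsk price-adjustment models. So there is no argument of the authors' to compare with. Your argument is a correct, self-contained justification.

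Two features of your route are worth keeping explicit, because the paper blurs them.

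First, your choice $\eta=\beta$ with second parameter $1$ in (\ref{ml1}) is essential, not cosmetic. With a general second parameter the termwise computation gives $\sum_j \Gamma_F^\alpha(\beta j+1)\,(-cS_F^\alpha(v)^\beta)^j/\Gamma_F^\alpha(\beta j+a)$, which is not geometric, and the lemma would be false. The paper also reuses the letter $a$ for both roles.

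Second, you rightly work from the integral definition (\ref{st9}) via the Laplace power rule. The series recipe (\ref{st2}) and formula (\ref{ml2}) carry $k!$ weights and only handle integer powers of $S_F^\alpha(t)$. They therefore cannot produce the factors $\Gamma_F^\alpha(\beta j+1)$ needed for the cancellation when $\beta$ is not an integer. For that cancellation, read $\Gamma_F^\alpha(\mu+1)$ in both (\ref{ml1}) and your power rule as $\int_0^\infty e^{-S_F^\alpha(l)}S_F^\alpha(l)^{\mu}\,d_F^\alpha l$, as the paper does in its power-function example. Literally, (\ref{gamma1}) has exponent $S_F^\alpha(x)-1$, so its argument must be reinterpreted this way.

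Your Tonelli/dominated-convergence justification of the termwise integration is sound.

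The one step to tighten is the extension beyond $|c|S_F^\alpha(v)^\beta<1$:
\begin{itemize}
\item Do the analytic continuation in the variable $s=1/S_F^\alpha(v)$, not in $v$, since the staircase $S_F^\alpha$ is not analytic.
\item It is only needed for $c>0$. There $E_F^\alpha(-cS_F^\alpha(t)^\beta)$ is bounded for $0<\beta\le 1$, so both sides are analytic on $\mathrm{Re}\,s>0$ and agree there.
\item For $c<0$ (the ``expectations'' cases), the transform integral converges exactly when $|c|S_F^\alpha(v)^\beta<1$. Your series argument therefore already covers its whole domain.
\end{itemize}
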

    

    \begin{remark}\label{r1}
        For $\beta=1$, the Mittag-Leffler functions become the fractal exponential function.
    \end{remark}
        
    \begin{definition}\cite{Ali3}
        Let $f\in C_F^{\alpha n}[a,b]$, then the Riemann-Liouville fractal derivative of order $\beta\in[0,1)$ is defined as \begin{eqnarray}\label{rld1}
            {^{}_a}\mathcal{D}_b^\beta f(S_F^\alpha(x)):=\dfrac{1}{\Gamma_F^\alpha(n-\beta)}(D_F^\alpha)^n\int_{S_F^\alpha(a)}^{S_F^\alpha(x)}\dfrac{f(S_F^\alpha(t))}{(S_F^\alpha(x)-S_F^\alpha(t))^{-n+\beta+\alpha}}d_F^\alpha t,
        \end{eqnarray}where $n-\alpha\leqslant\beta<n.$
    \end{definition}
    \begin{theorem}
        Let $f\in C_F^{\alpha n}[a,b]$ and $\beta>0$, then the Laplace transform of ${^{}_0}\mathcal{D}_x^\beta f(S_F^\alpha(x))$ is \cite{Ali3} \begin{eqnarray}\label{rld2}
         \mathfrak{L}\left[{^{}_0}\mathcal{D}_x^\beta f\right](S_F^\alpha(v))=S_F^\alpha(v)^\beta\mathfrak{L}[f](S_F^\alpha(v))-\sum_{k=0}^{n-1}S_F^\alpha(v)^k\left[{^{}_0}\mathcal{D}_x^{\beta-k-1} f(S_F^\alpha(x))\right]_{x=0},
        \end{eqnarray} and the corresponding Sumudu transformation is given by \begin{eqnarray}\label{rld3}
            \mathscr{S}\left[{^{}_0}\mathcal{D}_x^\beta f\right](S_F^\alpha(v))=\dfrac{1}{S_F^\alpha(v)^\beta}\mathscr{S}[f](S_F^\alpha(v))-\sum_{k=1}^{n}\dfrac{1}{S_F^\alpha(v)^{k}}\left[{^{}_0}\mathcal{D}_x^{\beta-k}f(S_F^\alpha(x))\right]_{x=0}.
        \end{eqnarray}
     \end{theorem}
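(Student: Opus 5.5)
The plan has two stages: derive the Sumudu formula (\ref{rld3}) from the Laplace formula (\ref{rld2}) using Lemma~\ref{rsl0}, and justify (\ref{rld2}) itself by factoring the Riemann--Liouville derivative into an integer-order derivative applied to a fractal integral. By definition (\ref{rld1}),
\[
{^{}_0}\mathcal{D}_x^\beta f=(D_F^\alpha)^n\,{^{}_0}\mathcal{I}_x^{\,n-\beta}f ,
\]
so I would set $g={^{}_0}\mathcal{I}_x^{\,n-\beta}f$ and treat it in two steps.

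For the Laplace part, first apply the fractal Laplace analogue of the derivative rule (\ref{st10}) $n$ times, which is obtained from (\ref{st10}) and Lemma~\ref{rsl0}. This gives
\[
\mathfrak{L}[(D_F^\alpha)^n g]=S_F^\alpha(v)^n\,\mathfrak{L}[g]-\sum_{k=0}^{n-1}S_F^\alpha(v)^{\,n-1-k}\,\big[(D_F^\alpha)^k g\big]_{x=0}.
\]
Next, by (\ref{rli1}), $\mathfrak{L}[g]=S_F^\alpha(v)^{-(n-\beta)}\mathfrak{L}[f]$. Finally, the boundary terms are rewritten using
\[
(D_F^\alpha)^k\,{^{}_0}\mathcal{I}_x^{\,n-\beta}f={^{}_0}\mathcal{D}_x^{\,\beta-n+k}f .
\]
After reindexing $k\mapsto n-1-k$, the boundary values become $\big[{^{}_0}\mathcal{D}_x^{\beta-k-1}f\big]_{x=0}$ with weight $S_F^\alpha(v)^k$, which is (\ref{rld2}).

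For the Sumudu part, start from (\ref{rsl3}): $\mathscr{S}[h](S_F^\alpha(v))=\frac{1}{S_F^\alpha(v)}\mathfrak{L}[h]\big(1/S_F^\alpha(v)\big)$, applied with $h={^{}_0}\mathcal{D}_x^\beta f$. Evaluating (\ref{rld2}) at $1/S_F^\alpha(v)$ and dividing by $S_F^\alpha(v)$ gives
\[
\frac{1}{S_F^\alpha(v)}\,S_F^\alpha(v)^{-\beta}\,\mathfrak{L}[f]\big(1/S_F^\alpha(v)\big)-\sum_{k=0}^{n-1}S_F^\alpha(v)^{-k-1}\big[{^{}_0}\mathcal{D}_x^{\beta-k-1}f\big]_{x=0}.
\]
Using (\ref{rsl3}) again, the first term is $S_F^\alpha(v)^{-\beta}\mathscr{S}[f](S_F^\alpha(v))$. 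Shifting $k+1\mapsto k$ in the sum yields exactly $\sum_{k=1}^n S_F^\alpha(v)^{-k}\big[{^{}_0}\mathcal{D}_x^{\beta-k}f\big]_{x=0}$, which is (\ref{rld3}). This stage is purely algebraic once (\ref{rld2}) is available.

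The main obstacle is the Laplace step, specifically the semigroup-type identity $(D_F^\alpha)^k\,{^{}_0}\mathcal{I}_x^{\,n-\beta}={^{}_0}\mathcal{D}_x^{\beta-n+k}$. This identity is essentially the definition (\ref{rld1}) when $\beta-n+k\ge0$, and must be read as a fractal Riemann--Liouville integral of order $n-k-\beta$ when that order is negative. The $\alpha$-dependent exponent in the kernel of (\ref{rld1}) makes this consistency delicate. I would handle it by working throughout in the variable $S_F^\alpha$, via the usual conjugacy between fractal and ordinary calculus, so that every identity reduces to its standard Riemann--Liouville counterpart. Since the Laplace formula is cited from \cite{Ali3}, one may alternatively take (\ref{rld2}) as given and prove only the Sumudu translation.
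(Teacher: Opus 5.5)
Your Sumudu step is exactly the paper's proof. The paper cites (\ref{rld2}) from \cite{Ali3} and obtains (\ref{rld3}) by inserting it into Lemma~\ref{rsl0}, with the same shift $k+1\mapsto k$. So your derivation of (\ref{rld2}) via ${}_0\mathcal{D}_x^\beta=(D_F^\alpha)^n\,{}_0\mathcal{I}_x^{n-\beta}$, the iterated derivative rule and (\ref{rli1}) is a supplement, not a different route.

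That supplement is sound within the paper's definitions, and the boundary-term identity is less delicate than you suggest. The kernel exponent $-n+\beta+\alpha$ in (\ref{rld1}) coincides with that of ${}_0\mathcal{I}_x^{n-\beta}$. For $k\ge 1$, the derivative of order $\beta-n+k$ uses the same kernel and the same prefactor $1/\Gamma_F^\alpha(n-\beta)$. Hence $(D_F^\alpha)^k\,{}_0\mathcal{I}_x^{n-\beta}={}_0\mathcal{D}_x^{\beta-n+k}$ holds by definition. For $k=0$ it is the usual convention ${}_0\mathcal{D}_x^{-\mu}={}_0\mathcal{I}_x^{\mu}$.
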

    \begin{proof}
        (\ref{rld3}) is obtained using the Lemma \ref{rsl0} in (\ref{rld2}).
    \end{proof}
    \begin{definition}\cite{Ali3}
        Let $f\in C_F^{\alpha n}[a,b]$ and $\beta>0$, then Caputo fractal derivative of order $\beta$ is given by \begin{eqnarray}\label{cd1}
            ^{C}_a\mathcal{D}_x^\beta f(S_F^\alpha(x)):=\dfrac{1}{\Gamma_F^\alpha(n-\beta)}\int_{S_F^\alpha(a)}^{S_F^\alpha(x)} \left(S_F^\alpha(x)-S_F^\alpha(t)\right)^{n\alpha-\beta-\alpha}(D_F^\alpha)^nf(S_F^\alpha(t))d_F^\alpha t,
        \end{eqnarray}where $n\alpha-\alpha\leqslant\beta<n\alpha$. 
    \end{definition}
    \begin{theorem}
        Let $f\in C_F^{\alpha n}[a,b]$ and $\beta>0$, then the Laplace tranform of $^{C}_0\mathcal{D}_x^\beta f(S_F^\alpha(x))$ is given by \cite{Ali3} \begin{eqnarray}\label{cd2}
            \mathfrak{L}\left[^{C}_0\mathcal{D}_x^\beta f\right](S_F^\alpha(v))=S_F^\alpha(v)^\beta\mathfrak{L}[f](S_F^\alpha(v))-\sum_{j=1}^{n}S_F^\alpha(v)^{\beta-j}\left[(D_F^\alpha)^{j-1}f(S_F^\alpha(x))\right]_{x=0},
        \end{eqnarray} and the corresponding  Sumudu transformation is given by \begin{eqnarray}\label{cd3}
            \mathscr{S}\left[^{C}_0\mathcal{D}_x^\beta f\right](S_F^\alpha(v))=\dfrac{1}{S_F^\alpha(v)^\beta}\mathscr{S}[f](S_F^\alpha(v))-\sum_{j=0}^{n-1}\dfrac{1}{S_F^\alpha(v)^{\beta-j}}\left[(D_F^\alpha)^{j}f(S_F^\alpha(x))\right]_{x=0}.
        \end{eqnarray}
    \end{theorem}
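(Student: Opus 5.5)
The Laplace part \eqref{cd2} is quoted from \cite{Ali3}, so what has to be shown is the Sumudu version \eqref{cd3}. The plan mirrors the proofs of \eqref{rli2} and \eqref{rld3}: start from \eqref{cd2} and pass to the Sumudu side using Lemma \ref{rsl0}, specifically the relation \eqref{rsl3},
\[
\mathscr{S}[g](S_F^\alpha(v))=\dfrac{1}{S_F^\alpha(v)}\,\mathfrak{L}[g]\left(\dfrac{1}{S_F^\alpha(v)}\right),
\]
applied with $g={^{C}_0}\mathcal{D}_x^\beta f$.

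First, I would evaluate \eqref{cd2} at the reciprocal argument. That is, I replace $S_F^\alpha(v)$ by $1/S_F^\alpha(v)$ throughout, which gives
\[
\mathfrak{L}\left[^{C}_0\mathcal{D}_x^\beta f\right]\left(\dfrac{1}{S_F^\alpha(v)}\right)=S_F^\alpha(v)^{-\beta}\,\mathfrak{L}[f]\left(\dfrac{1}{S_F^\alpha(v)}\right)-\sum_{j=1}^{n}S_F^\alpha(v)^{j-\beta}\left[(D_F^\alpha)^{j-1}f\right]_{x=0}.
\]

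Second, I would multiply this identity by $1/S_F^\alpha(v)$. On the left, \eqref{rsl3} turns the expression into $\mathscr{S}[^{C}_0\mathcal{D}_x^\beta f](S_F^\alpha(v))$. In the first term on the right, \eqref{rsl3} applied to $f$ itself turns $\frac{1}{S_F^\alpha(v)}\mathfrak{L}[f](1/S_F^\alpha(v))$ into $\mathscr{S}[f](S_F^\alpha(v))$, so that term becomes $S_F^\alpha(v)^{-\beta}\mathscr{S}[f](S_F^\alpha(v))$.

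Third, I would re-index the sum. Each summand becomes $S_F^\alpha(v)^{j-1-\beta}\left[(D_F^\alpha)^{j-1}f\right]_{x=0}$. Setting $j\mapsto j+1$ runs $j$ from $0$ to $n-1$ and gives the factor $S_F^\alpha(v)^{j-\beta}=1/S_F^\alpha(v)^{\beta-j}$ multiplying $\left[(D_F^\alpha)^{j}f\right]_{x=0}$. This is exactly \eqref{cd3}.

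The only delicate point, which I expect to be the main obstacle, is justifying the substitution $S_F^\alpha(v)\mapsto 1/S_F^\alpha(v)$. Lemma \ref{rsl0} is phrased formally: its argument is a value of the staircase function, and the lemma is used with the reciprocal of that value. I would treat the transforms as functions of the positive real parameter $s=S_F^\alpha(v)$, and state that \eqref{cd2} holds for every $s$ in the region of convergence. Under that reading the reciprocal substitution is legitimate. Beyond this, the argument is pure bookkeeping of exponents and indices.
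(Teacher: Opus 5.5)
Your proposal is correct and follows the same route as the paper. The paper takes \eqref{cd2} from \cite{Ali3} and obtains \eqref{cd3} by substituting \eqref{cd2} into relation \eqref{rsl3} of Lemma \ref{rsl0}; your index shift $j\mapsto j+1$ reproduces the stated sum, and your reading of the substitution $S_F^\alpha(v)\mapsto 1/S_F^\alpha(v)$ in terms of the positive parameter $s=S_F^\alpha(v)$ makes explicit a point the paper leaves implicit.
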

    \begin{proof}
        (\ref{cd3}) is obtained using the Lemma \ref{rsl0} in (\ref{cd2}).
    \end{proof}
    \begin{definition} \cite{Ali5}
        Let $f,g:[0,\infty)\to\mathbb{R}$, then the convolution of $f$ and $g$ is given by \begin{eqnarray}\label{c1}
            (f\ast g)(S_F^\alpha(t))=\int_{S_F^\alpha(0)}^{S_F^\alpha(t)}f\left(S_F^\alpha(t)-S_F^\alpha(z)\right)g\left(S_F^\alpha(z)\right)d_F^\alpha z.
        \end{eqnarray}
    \end{definition}
    \begin{proposition}
        Assume that $f,g:[0,\infty)\to\mathbb{R}$, then the Laplace transformation of the convolution $f\ast g$ is \cite{Ali5}
        \begin{eqnarray}\label{c2}
            \mathfrak{L}[f\ast g](S_F^\alpha(v))=\mathfrak{L}[f](S_F^\alpha(v))\mathfrak{L}[g](S_F^\alpha(v)).
        \end{eqnarray} and the corresponding Sumudu transformation is given by \begin{eqnarray}\label{c3}
            \mathscr{S}[f\ast g](S_F^\alpha(v))=S_F^\alpha(v)\,\mathscr{S}[f](S_F^\alpha(v))\,\mathscr{S}[g](S_F^\alpha(v)).
        \end{eqnarray}
    \end{proposition}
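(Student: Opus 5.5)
The plan has two parts. The Laplace identity (\ref{c2}) is quoted from \cite{Ali5}, so I take it as given. The Sumudu identity (\ref{c3}) I will derive from it through Lemma \ref{rsl0}, in the same way (\ref{rli2}), (\ref{rld3}) and (\ref{cd3}) were obtained from their Laplace counterparts.

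\textbf{Step 1: rewrite the Sumudu transform of the convolution.} Write $u=S_F^\alpha(v)$ for brevity. By (\ref{rsl3}) applied to $f\ast g$,
\begin{eqnarray*}
\mathscr{S}[f\ast g](u)=\frac{1}{u}\,\mathfrak{L}[f\ast g]\left(\frac{1}{u}\right).
\end{eqnarray*}

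\textbf{Step 2: apply the Laplace convolution theorem.} By (\ref{c2}), evaluated at the argument $1/u$, this equals
\begin{eqnarray*}
\frac{1}{u}\,\mathfrak{L}[f]\left(\frac{1}{u}\right)\mathfrak{L}[g]\left(\frac{1}{u}\right).
\end{eqnarray*}

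\textbf{Step 3: convert each factor back.} Applying (\ref{rsl3}) to $f$ and to $g$ separately gives $\mathfrak{L}[f](1/u)=u\,\mathscr{S}[f](u)$, and likewise for $g$. Substituting,
\begin{eqnarray*}
\mathscr{S}[f\ast g](u)=\frac{1}{u}\cdot u\,\mathscr{S}[f](u)\cdot u\,\mathscr{S}[g](u)=u\,\mathscr{S}[f](u)\,\mathscr{S}[g](u),
\end{eqnarray*}
which is (\ref{c3}).

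\textbf{Main obstacle.} The algebra above is routine; the delicate point is using Lemma \ref{rsl0} at the argument $1/S_F^\alpha(v)$. The transforms are written as functions of $S_F^\alpha(v)$, so I need (\ref{rsl3}) to hold with $S_F^\alpha(v)$ replaced by an arbitrary positive value $s$ in the range of the staircase function. I will justify this by rereading the proof of Lemma \ref{rsl0}: the substitution $S_F^\alpha(l)=s\,S_F^\alpha(t)$ only uses that $s$ is a positive scalar, so the lemma holds for any $s>0$. I will also state that the integrals involved converge, i.e.\ that $f$ and $g$ are of fractal exponential order, so that (\ref{c2}) applies.

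\textbf{Optional direct check.} As a cross-check, the result can also be verified directly from (\ref{st9}). Insert (\ref{c1}) into (\ref{st9}), interchange the order of the $d_F^\alpha z$ and $d_F^\alpha t$ integrations (Fubini over the region $0\le z\le t$), and substitute $S_F^\alpha(t)-S_F^\alpha(z)=S_F^\alpha(w)$. The double integral then factorises into $\int_0^\infty e^{-S_F^\alpha(w)/u}f\,d_F^\alpha w\cdot\int_0^\infty e^{-S_F^\alpha(z)/u}g\,d_F^\alpha z$. Each factor equals $u$ times the corresponding Sumudu transform, and together with the prefactor $1/u$ this again yields $u\,\mathscr{S}[f]\,\mathscr{S}[g]$. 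The weak point of this route is justifying the shift invariance of the fractal integral under that substitution.
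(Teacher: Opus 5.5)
Your proposal is correct and is essentially the paper's own proof. The paper likewise writes $\mathscr{S}[f\ast g](S_F^\alpha(v))=\frac{1}{S_F^\alpha(v)}\mathfrak{L}[f\ast g]\bigl(1/S_F^\alpha(v)\bigr)$ by Lemma \ref{rsl0}, factorises it using (\ref{c2}), and converts each Laplace factor back to $S_F^\alpha(v)\,\mathscr{S}[\cdot](S_F^\alpha(v))$; your justification of Lemma \ref{rsl0} at the argument $1/S_F^\alpha(v)$ and the direct Fubini cross-check are extra care that the paper does not include.
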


    \begin{proof}
        We have \begin{eqnarray}
            \mathscr{S}[f\ast g](S_F^\alpha(v))&=&\dfrac{1}{S_F^\alpha(v)}\mathfrak{L}[f\ast g]\left(\dfrac{1}{S_F^\alpha(v)}\right)\nonumber\\&=&\dfrac{1}{S_F^\alpha(v)}\mathfrak{L}[f]\left(\dfrac{1}{S_F^\alpha(v)}\right)\mathfrak{L}[g]\left(\dfrac{1}{S_F^\alpha(v)}\right)\nonumber\\&=&\dfrac{1}{S_F^\alpha(v)}\cdot S_F^\alpha(v)\mathscr{S}[f](S_F^\alpha(v))\cdot S_F^\alpha(v)\mathscr{S}[g](S_F^\alpha(v))\nonumber\\&=&S_F^\alpha(v)\mathscr{S}[f](S_F^\alpha(v))\mathscr{S}[g](S_F^\alpha(v)).\nonumber
        \end{eqnarray}
    \end{proof}

\section{Main results}
In this section, we introduce a new fractal derivative with no singular kernel. After that, we present solutions to economic models using the Caputo fractal derivative and the newly introduced wsk-fractal derivative. 

\subsection{A new fractal derivative without singular kernel}

\begin{definition}
    Let $f\in C^{\alpha n}_F[a,b]$, where $b>a$. Then a new fractal time derivative of order $\gamma\in[n-1,n)$ can be defined as \begin{eqnarray}\label{nfd1}
        {^N_a}D_F^\gamma f(S_F^\alpha(t))=\dfrac{1}{\Gamma_F^\alpha(n-\gamma)}\int_{S_F^\alpha(0)}^{S_F^\alpha(t)}\dfrac{(D^\alpha_{F,\tau})^{n}f(S_F^\alpha(\tau))}{\left(S_F^\alpha(t)-S_F^\alpha(\tau)\right)^\gamma}d_F^\alpha\tau.
    \end{eqnarray} For $n=1$, we get \begin{eqnarray}\label{nfd1.1}
        {^N_a}D_F^\gamma f(S_F^\alpha(t))=\dfrac{1}{\Gamma_F^\alpha(1-\gamma)}\int_{S_F^\alpha(0)}^{S_F^\alpha(t)}\dfrac{D^\alpha_{F,\tau}f(S_F^\alpha(\tau))}{\left(S_F^\alpha(t)-S_F^\alpha(\tau)\right)^\gamma}d_F^\alpha\tau.
    \end{eqnarray}
\end{definition}
\begin{example}[Constant function]
    For the function $f(S_F^\alpha(t))=c$, $c\in\mathbb{R}$, \normalfont{(\ref{nfd1})} gives $${^N_a}D_F^\gamma f(S_F^\alpha(t))=\dfrac{1}{\Gamma_F^\alpha(n-\gamma)}\int_{S_F^\alpha(0)}^{S_F^\alpha(t)}\dfrac{0}{\left(S_F^\alpha(t)-S_F^\alpha(\tau)\right)^\gamma}d_F^\alpha\tau=0.$$
\end{example}
\begin{example}[Power function]
    To calculate ${^N_a}D_F^\gamma (S_F^\alpha(t)^p)$, $p\in\mathbb{R}$, we start with $p>n-1$. Now \begin{eqnarray}
        ^N_aD_F^\gamma (S_F^\alpha(t)^p)&=&\dfrac{1}{\Gamma_F^\alpha(n-\gamma)}\int_{S_F^\alpha(0)}^{S_F^\alpha(t)}\dfrac{(D^\alpha_F)^n(S_F^\alpha(\tau)^p)}{\left(S_F^\alpha(t)-S_F^\alpha(\tau)\right)^{\gamma-n+1}}d_F^\alpha\tau\nonumber\\&=&\dfrac{1}{\Gamma_F^\alpha(n-\gamma)}\int_{S_F^\alpha(0)}^{S_F^\alpha(t)}\dfrac{\Gamma^\alpha_F(p+1)}{\Gamma_F^\alpha(p-n+1)}S_F^\alpha(\tau)^{p-n}\left(S_F^\alpha(t)-S_F^\alpha(\tau)\right)^{n-\gamma-1}d_F^\alpha\tau.\nonumber\\ \label{nfd2}
    \end{eqnarray}Since $0\leqslant\tau\leqslant t$, there exists $0\leqslant\lambda\leqslant1$ such that $\tau=\lambda t$ so that $S^\alpha_F(\tau)=S^\alpha_F(\lambda)S^\alpha_F(t)$ and $d_F^\alpha\tau=S_F^\alpha(\lambda)d_F^\alpha t$. Then \normalfont{(\ref{nfd2})} gives \begin{eqnarray}
        ^N_aD_F^\gamma (S_F^\alpha(t)^p)&=&\dfrac{\Gamma^\alpha_F(p+1)}{\Gamma_F^\alpha(n-\alpha)\Gamma_F^\alpha(p-n+1)}S^\alpha_F(t)^{p-\gamma}\int_0^1 S_F^\alpha(\lambda)^{p-n}\left(1-S^\alpha_F(\lambda)\right)^{n-\gamma-1}d^\alpha_F\lambda\nonumber\\&=&\dfrac{\Gamma^\alpha_F(p+1)}{\Gamma_F^\alpha(n-\gamma)\Gamma_F^\alpha(p-n+1)}S^\alpha_F(t)^{p-\gamma}\mathcal{B}^\alpha_F(p-n+1,n-\gamma),\label{nfd3}
    \end{eqnarray}where $\mathcal{B}^\alpha_F(m_1,m_2)$ is called the fractal Beta function, which is given by \cite{Ali3} \begin{eqnarray}\label{fbf1}
        \mathcal{B}_F^\alpha(m_1,m_2)=\int_0^1 S_F^\alpha(\mu)^{m_1-1}\left(1-S^\alpha_F(\mu)\right)^{m_2-1}d^\alpha_F\mu=\dfrac{\Gamma^\alpha_F(m_1)\Gamma^\alpha_F(m_2)}{\Gamma^\alpha_F(m_1+m_2)}.
    \end{eqnarray}
    Now using \normalfont{(\ref{fbf1})}, \normalfont{(\ref{nfd3})} becomes \begin{eqnarray}
        ^N_aD_F^\gamma(S_F^\alpha(t)^p)&=&\dfrac{\Gamma^\alpha_F(p+1)}{\Gamma_F^\alpha(n-\gamma)\Gamma_F^\alpha(p-n+1)}S^\alpha_F(t)^{p-\gamma}\dfrac{\Gamma^\alpha_F(p-n+1)\Gamma^\alpha_F(n-\gamma)}{\Gamma^\alpha_F(p-\gamma+1)}\nonumber\\&=&\dfrac{\Gamma^\alpha_F(p+1)}{\Gamma^\alpha_F(p-\gamma+1)}S^\alpha_F(t)^{p-\gamma}.
    \end{eqnarray}
    Again, consideration of $p\leqslant n-1$ gives $(D^\alpha_{F,\tau})^n(S_F^\alpha(\tau)^p)=0$, which leads to $${^N_a}D_F^\gamma (S_F^\alpha(t)^p)=\dfrac{1}{\Gamma_F^\alpha(n-\gamma)}\int_0^t\dfrac{0}{\left(S_F^\alpha(t)-S_F^\alpha(\tau)\right)^{\gamma-n+1}}d_F^\alpha\tau=0.$$ Thus, \begin{eqnarray}
       {^N_a}D_F^\gamma (S_F^\alpha(t)^p)=\begin{cases}
           \dfrac{\Gamma^\alpha_F(p+1)}{\Gamma^\alpha_F(p-\gamma+1)}S^\alpha_F(t)^{p-\gamma}  & \text{if }p>n-1, \\
            0 & \text{otherwise}.
       \end{cases}
    \end{eqnarray}
\end{example}
\begin{theorem}[Linearity]
    The fractal derivative given by (\ref{nfd1}) is a linear operator, that is, for any $l_1,l_2\in\mathbb{R}$, \begin{eqnarray}\label{fl1}
        {^N_a} D_F^\gamma (l_1f\left(S_F^\alpha(t))+l_2g(S_F^\alpha(t))\right)=l_1{^N_a}D_F^\gamma f\left(S_F^\alpha(t)\right)+l_2{^N_a}D_F^\gamma f\left(S_F^\alpha(t)\right).
    \end{eqnarray}
\end{theorem}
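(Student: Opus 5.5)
The plan is to reduce linearity of ${^N_a}D_F^\gamma$ to two linearity facts already implicit in the fractal calculus of \cite{Parvate2003}. The first is linearity of the local fractal derivative $D^\alpha_{F,\tau}$, and hence of its $n$-fold iterate $(D^\alpha_{F,\tau})^n$. The second is linearity of the fractal integral $\int\cdot\,d_F^\alpha\tau$. Note that the right-hand side of (\ref{fl1}) as printed has a typo: its second term should read $l_2\,{^N_a}D_F^\gamma g(S_F^\alpha(t))$. I will prove the statement in that corrected form.

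First step. Set $h=l_1f+l_2g$. Since $f,g\in C_F^{\alpha n}[a,b]$, $h$ lies in the same class. Indeed, $D_F^\alpha$ is defined as an $F$-limit of difference quotients $\frac{h(y)-h(x)}{S_F^\alpha(y)-S_F^\alpha(x)}$. Each such quotient splits as $l_1$ times the $f$-quotient plus $l_2$ times the $g$-quotient. Limits respect linear combinations, so $D_F^\alpha h=l_1D_F^\alpha f+l_2D_F^\alpha g$. Inducting on $k\le n$ gives $(D^\alpha_{F,\tau})^n h=l_1(D^\alpha_{F,\tau})^n f+l_2(D^\alpha_{F,\tau})^n g$. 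At each stage the previous derivatives exist, because $f$ and $g$ are $\alpha n$-times differentiable.

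Second step. Substitute this into (\ref{nfd1}). The integrand becomes
\[
\frac{l_1(D^\alpha_{F,\tau})^n f(S_F^\alpha(\tau))+l_2(D^\alpha_{F,\tau})^n g(S_F^\alpha(\tau))}{(S_F^\alpha(t)-S_F^\alpha(\tau))^\gamma}.
\]
The kernel $(S_F^\alpha(t)-S_F^\alpha(\tau))^{-\gamma}$ is the same for both pieces, so the integrand splits as a sum. The $F^\alpha$-integral is defined via upper and lower fractal Riemann sums. It is therefore linear on the class of $F^\alpha$-integrable functions: for $l_i\ge0$, the sums of a linear combination are bounded by the combination of the sums, and negative scalars just swap upper and lower sums. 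The prefactor $1/\Gamma_F^\alpha(n-\gamma)$ is a constant, so pulling $l_1$ and $l_2$ out of the integral yields exactly $l_1\,{^N_a}D_F^\gamma f+l_2\,{^N_a}D_F^\gamma g$.

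Main obstacle. The only nontrivial point is justifying that the two separate integrals exist, so that the integral can legitimately be split. This matters because the kernel is singular at $\tau=t$ when $\gamma>0$, so these are improper integrals. I would handle this by integrating up to $S_F^\alpha(t)-\varepsilon$, where the integrands are $F$-continuous and bounded, so linearity holds there. Then I would let $\varepsilon\to0$, using linearity of limits. This implicitly assumes that ${^N_a}D_F^\gamma f$ and ${^N_a}D_F^\gamma g$ exist, which is the standing assumption of the definition (\ref{nfd1}). If desired, existence can be secured by bounding $|(D^\alpha_F)^n f|$ on $[a,b]$, which reduces to the convergence of $\int_0^{S_F^\alpha(t)}(S_F^\alpha(t)-S_F^\alpha(\tau))^{-\gamma}d_F^\alpha\tau$; that convergence is the same computation as in the Power function example.
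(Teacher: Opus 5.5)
Your proposal is correct and takes the same approach as the paper: use linearity of $(D^\alpha_{F,\tau})^n$ to split the integrand of (\ref{nfd1}), split the fractal integral, and pull $l_1,l_2$ out; your correction of the second term to $l_2\,{^N_a}D_F^\gamma g(S_F^\alpha(t))$ is right, and the paper's proof has the same $f$-for-$g$ slip in its intermediate lines. Your justification for splitting the improper integral is more careful than the paper's, but your optional existence remark matches the Power function example only when $n=1$: for $n\ge 2$ the kernel in (\ref{nfd1}) has exponent $\gamma\ge 1$ and is not integrable, whereas that example silently uses the exponent $\gamma-n+1$.
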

\begin{proof}
    We have \begin{eqnarray}
    {{^N}_a}D_F^\gamma (l_1f\left(S_F^\alpha(t))+l_2g(S_F^\alpha(t))\right)&=&\dfrac{1}{\Gamma_F^\alpha(n-\gamma)}\int_0^t\dfrac{(D^\alpha_{F,\tau})^{n}\left(l_1f(S^\alpha_F(\tau))+l_2g\left(S_F^\alpha(\tau)\right)\right)}{\left(S_F^\alpha(t)-S_F^\alpha(\tau)\right)^\gamma}d_F^\alpha\tau\nonumber\\&=&\dfrac{1}{\Gamma_F^\alpha(n-\gamma)}\int_0^t\dfrac{(D^\alpha_{F,\tau})^{n}\left(l_1f(S^\alpha_F(\tau))\right)}{\left(S_F^\alpha(t)-S_F^\alpha(\tau)\right)^\gamma}d_F^\alpha\tau\nonumber\\&&\hspace{1cm}+\dfrac{1}{\Gamma_F^\alpha(n-\gamma)}\int_0^t\dfrac{(D^\alpha_{F,\tau})^{n}\left(l_2f(S^\alpha_F(\tau))\right)}{\left(S_F^\alpha(t)-S_F^\alpha(\tau)\right)^\gamma}d_F^\alpha\tau\nonumber\\&=&l_1\dfrac{1}{\Gamma_F^\alpha(n-\gamma)}\int_0^t\dfrac{(D^\alpha_{F,\tau})^{n}\left(f(S^\alpha_F(\tau))\right)}{\left(S_F^\alpha(t)-S_F^\alpha(\tau)\right)^\gamma}d_F^\alpha\tau\nonumber\\&&\hspace{1cm}+l_2\dfrac{1}{\Gamma_F^\alpha(n-\gamma)}\int_0^t\dfrac{(D^\alpha_{F,\tau})^{n}\left(f(S^\alpha_F(\tau))\right)}{\left(S_F^\alpha(t)-S_F^\alpha(\tau)\right)^\gamma}d_F^\alpha\tau\nonumber\\&=&l_1{^N_a}D_F^\gamma f\left(S_F^\alpha(t)\right)+l_2{^N_a}D_F^\gamma f\left(S_F^\alpha(t)\right).\nonumber
    \end{eqnarray}
\end{proof}

\begin{definition}[wsk-fractal time derivative]
    By changing the kernel $\left(S_F^\alpha(t)-S_F^\alpha(\tau)\right)^{-\gamma}$ with the function $e^{-\dfrac{\gamma}{1-\gamma}S^\alpha_F(t)}$ and $\dfrac{1}{\Gamma_F^\alpha(1-\gamma)}$ with $\dfrac{\mathcal{N}(\gamma)}{1-\gamma}$ in \normalfont{(\ref{nfd1.1})}, we obtain the following new definition of fractal time derivative\begin{eqnarray}\label{wsk1}
    ^{\text{wsk}}\mathfrak{D}^\gamma_Ff(S_F^\alpha(t))=\dfrac{\mathcal{N}(\gamma)}{1-\gamma}\int_a^t e^{-\dfrac{\gamma(S^\alpha_F(t)-S^\alpha_F(\tau))}{1-\gamma}}D^\alpha_{F,\tau}f(S_F^\alpha(t))d^\alpha_F\tau,
    \end{eqnarray} which will be called the {\bf wsk-fractal time derivative}. Here $\mathcal{N}(\gamma)$ is a normalization function such that $\mathcal{N}(0)=\mathcal{N}(1)=1$.
\end{definition}
\begin{theorem}
    For the \normalfont{wsk-fractal time derivative}, we have \begin{eqnarray}\label{wsk2}
        \mathfrak{L}\left[^{\text{wsk}}\mathfrak{D}^\gamma_Ff\right](S_F^\alpha(v))=\dfrac{\mathcal{N}(\gamma)}{1-\gamma}\left\{\dfrac{S^\alpha_F(v)\mathfrak{L}[f](S_F^\alpha(v))-f(0)}{S^\alpha_F(v)+\dfrac{\gamma}{1-\gamma}}\right\},
    \end{eqnarray} and \begin{eqnarray}\label{wsk2.1}
        \mathscr{S}\left[^{\text{wsk}}\mathfrak{D}^\gamma_Ff\right](S_F^\alpha(v))=\dfrac{\mathcal{N}(\gamma)}{1-\gamma}\left\{\dfrac{\mathscr{S}[f](S_F^\alpha(v))-f(0)}{1+\dfrac{\gamma}{1-\gamma}S^\alpha_F(v)}\right\}.
    \end{eqnarray}
\end{theorem}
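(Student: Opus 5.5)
The plan is to view the wsk-fractal time derivative \eqref{wsk1} as a fractal convolution in the sense of \eqref{c1}, and then apply the convolution property \eqref{c2}. I read the integrand as $D^\alpha_{F,\tau}f(S_F^\alpha(\tau))$ and take $a=0$. Put
\[
k(S_F^\alpha(t))=e^{-\frac{\gamma}{1-\gamma}S_F^\alpha(t)},\qquad g(S_F^\alpha(t))=D_F^\alpha f(S_F^\alpha(t)).
\]
Then
\[
{}^{\text{wsk}}\mathfrak{D}^\gamma_F f(S_F^\alpha(t))=\frac{\mathcal{N}(\gamma)}{1-\gamma}\,(k\ast g)(S_F^\alpha(t)),
\]
because the kernel depends only on the difference $S_F^\alpha(t)-S_F^\alpha(\tau)$. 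Linearity of the fractal Laplace transform, which follows from linearity of the fractal integral, then gives
\[
\mathfrak{L}\big[{}^{\text{wsk}}\mathfrak{D}^\gamma_F f\big]=\frac{\mathcal{N}(\gamma)}{1-\gamma}\,\mathfrak{L}[k]\,\mathfrak{L}[g].
\]

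Next I compute the two factors.

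\textbf{The kernel.} Integrating directly from \eqref{rsl1} gives
\[
\mathfrak{L}[k](S_F^\alpha(v))=\int_0^\infty e^{-\left(S_F^\alpha(v)+\frac{\gamma}{1-\gamma}\right)S_F^\alpha(t)}\,d_F^\alpha t=\frac{1}{S_F^\alpha(v)+\frac{\gamma}{1-\gamma}}.
\]
This uses the fractal antiderivative of $e^{-cS_F^\alpha(t)}$, valid for $S_F^\alpha(v)+\gamma/(1-\gamma)>0$. Alternatively, it follows from the Sumudu transform of the exponential \eqref{st8} with $a=-\gamma/(1-\gamma)$, combined with Lemma \ref{rsl0}.

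\textbf{The derivative.} I need $\mathfrak{L}[D_F^\alpha f]=S_F^\alpha(v)\mathfrak{L}[f]-f(0)$. I would obtain this by fractal integration by parts, with boundary terms vanishing at infinity under the usual exponential-order assumption. Equivalently, take the Sumudu derivative rule \eqref{st10} and translate it through Lemma \ref{rsl0}, or use the Caputo formula \eqref{cd2} with $n=1$ and $\beta\to\alpha$-order one.

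Multiplying the two factors yields \eqref{wsk2}.

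For \eqref{wsk2.1}, I apply Lemma \ref{rsl0}: evaluate \eqref{wsk2} at $1/S_F^\alpha(v)$ and multiply by $1/S_F^\alpha(v)$. Substituting $\mathfrak{L}[f](1/S_F^\alpha(v))=S_F^\alpha(v)\mathscr{S}[f](S_F^\alpha(v))$, the numerator becomes $\mathscr{S}[f]-f(0)$. The denominator becomes
\[
S_F^\alpha(v)\left(\frac{1}{S_F^\alpha(v)}+\frac{\gamma}{1-\gamma}\right)=1+\frac{\gamma}{1-\gamma}S_F^\alpha(v),
\]
which is exactly \eqref{wsk2.1}. As a cross-check, the same result comes out directly from the Sumudu convolution rule \eqref{c3}, using $\mathscr{S}[k]=1/\big(1+\frac{\gamma}{1-\gamma}S_F^\alpha(v)\big)$ and $\mathscr{S}[D_F^\alpha f]=(\mathscr{S}[f]-f(0))/S_F^\alpha(v)$ from \eqref{st10}.

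The main obstacle is justifying the convolution identification rigorously in the fractal setting. Two points need care. First, the integration variable and the lower limit must match \eqref{c1}; in particular $S_F^\alpha(0)=0$ requires choosing the base point $l=0$ in \eqref{ms5}. Second, the formula for $\mathfrak{L}[D_F^\alpha f]$ needs growth conditions on $f$ so that both the integration by parts and the interchange of integrals in \eqref{c2} are legitimate. I would state these assumptions (exponential order in $S_F^\alpha(t)$, and $f\in C_F^\alpha[0,\infty)$) explicitly and otherwise rely on the cited results.
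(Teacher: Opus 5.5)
Your proof is correct and follows essentially the same route as the paper. You write the wsk-derivative as $\frac{\mathcal{N}(\gamma)}{1-\gamma}$ times the fractal convolution of $e^{-\frac{\gamma}{1-\gamma}S_F^\alpha(t)}$ with $D_F^\alpha f$, apply \eqref{c2} together with the Laplace transforms of the exponential and of $D_F^\alpha f$, and pass to the Sumudu form via Lemma~\ref{rsl0}. Your extra cross-check through \eqref{c3} and \eqref{st10} is consistent; the side remark about using \eqref{cd2} with a first-order $\beta$ falls outside that formula's stated range $0\le\beta<\alpha$ for $n=1$, but your argument does not depend on it.
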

\begin{proof}
    We have \begin{eqnarray}\label{wsk3}
        \mathfrak{L}\left[^{\text{wsk}}\mathfrak{D}^\gamma_Ff\right](S_F^\alpha(v))&=&\int_0^\infty e^{-S^\alpha_F(v)S^\alpha_F(t)}\left[\dfrac{\mathcal{N}(\gamma)}{1-\gamma}\int_0^t e^{-\dfrac{\gamma(S^\alpha_F(t)-S^\alpha_F(\tau))}{1-\gamma}}D^\alpha_{F,\tau}f(\tau)d^\alpha_F\tau\right]d^\alpha_Ft\nonumber\\&=&\dfrac{\mathcal{N}(\gamma)}{1-\gamma}\int_0^\infty e^{-S^\alpha_F(v)S^\alpha_F(t)}\left[e^{-\dfrac{\gamma S^\alpha_F(t)}{1-\gamma}}\ast D^\alpha_{F,t}f(S_F^\alpha(t))\right](S_F^\alpha(v))d^\alpha_Ft\nonumber\\&=&\dfrac{\mathcal{N}(\gamma)}{1-\gamma}\mathfrak{L}\left[e^{-\dfrac{\gamma S^\alpha_F(t)}{1-\gamma}}\ast D^\alpha_{F,t}f(S_F^\alpha(t))\right](S_F^\alpha(v))\nonumber\\&=&\dfrac{\mathcal{N}(\gamma)}{1-\gamma}\mathfrak{L}\left[e^{-\dfrac{\gamma S^\alpha_F(t)}{1-\gamma}}\right](S_F^\alpha(v))\mathfrak{L}\left[D^\alpha_{F,\tau}f(S_F^\alpha(t))\right](S_F^\alpha(v)).
    \end{eqnarray} Note that \begin{eqnarray}\label{wsk4}
        \mathfrak{L}[e^{aS^\alpha_F(t)}](S_F^\alpha(v))=\dfrac{1}{S^\alpha_F(v)-a},
    \end{eqnarray} and \begin{eqnarray}\label{wsk5}
        \mathfrak{L}\left[D^\alpha_{F,t}f(S_F^\alpha(t))\right](S_F^\alpha(v))=S^\alpha_F(v)\mathfrak{L}[f](S_F^\alpha(v))-f(0).
    \end{eqnarray}
    Using (\ref{wsk4}) and (\ref{wsk5}) in (\ref{wsk3}), we get \begin{eqnarray}\label{wsk6}
        \mathfrak{L}\left[^{\text{wsk}}\mathfrak{D}^\gamma_Ff\right](S_F^\alpha(v))=\dfrac{\mathcal{N}(\gamma)}{1-\gamma}\left\{\dfrac{S^\alpha_F(v)\mathfrak{L}[f](S_F^\alpha(v))-f(0)}{S^\alpha_F(v)+\dfrac{\gamma}{1-\gamma}}\right\}.
    \end{eqnarray}
    Next, (\ref{wsk2.1}) is obtained by using the Lemma \ref{rsl0}.
\end{proof}

\subsection{Economic model formulated via the Caputo fractal derivative} \begin{enumerate}[\text{Case }1.]
    \item The price adjustment equation, formulated using the Caputo fractal derivative without the expectations of agents, is given as follows: \begin{eqnarray}\label{emc1}
    ^{C}_0\mathcal{D}_x^\beta p(S_F^\alpha(t))+\lambda(d_1+s_1)p(S_F^\alpha(t))=\lambda(d_0+s_0),\;\beta\in(0,1).
\end{eqnarray} Applying Sumudu transform, we get \begin{eqnarray}\label{emc2}
    &&\dfrac{1}{S_F^\alpha(v)^\beta}\mathscr{S}[p](S_F^\alpha(v))-\dfrac{1}{S_F^\alpha(v)^\beta}p(0)+\lambda(d_1+s_1)\mathscr{S}[p](S_F^\alpha(v))=\lambda(d_0+s_0)\nonumber\\&\implies&\left(\dfrac{1}{S_F^\alpha(v)^\beta}+\lambda(d_1+s_1)\right)\mathscr{S}[p](S_F^\alpha(v))=\dfrac{p(0)}{S_F^\alpha(v)^\beta}+\lambda(d_0+s_0)\nonumber\\&\implies&\mathscr{S}[p](S_F^\alpha(v))=\dfrac{p(0)}{1+\lambda(d_1+s_1)S_F^\alpha(v)^\beta}+\dfrac{\lambda(d_0+s_0)S_F^\alpha(v)^\beta}{1+\lambda(d_1+s_1)S_F^\alpha(v)^\beta}.
\end{eqnarray} Now, by taking inverse Sumudu transform and using Lemma \ref{l1}, we get \begin{eqnarray}\label{emc3}
    p(S_F^\alpha(t))&=&p(0)E_F^\alpha\left(-\lambda(d_1+s_1)S_F^\alpha(t)^\beta\right)+\dfrac{\lambda(d_0+s_0)}{d_1+s_1}\left(1-E_F^\alpha\left(-\lambda(d_1+s_1)S_F^\alpha(t)^\beta\right)\right).\nonumber\\
\end{eqnarray}

\item The price adjustment equation, formulated using the Caputo fractal derivative, when the expectations of agents are considered, is given as follows: \begin{eqnarray}\label{emc4}
    ^{C}_0\mathcal{D}_x^\beta p(S_F^\alpha(t))-\dfrac{d_1+s_1}{d_2+s_2}p(S_F^\alpha(t))=-\dfrac{d_0+s_0}{d_2+s_2}.
\end{eqnarray} Applying Sumudu transform, we get \begin{eqnarray}\label{emc5}
    &&\dfrac{1}{S_F^\alpha(v)^\beta}\mathscr{S}[p](S_F^\alpha(v))-\dfrac{1}{S_F^\alpha(v)^\beta}p(0)-\dfrac{d_1+s_1}{d_2+s_2}\mathscr{S}[p](S_F^\alpha(v))=-\dfrac{d_0+s_0}{d_2+s_2}\nonumber\\&\implies&\left(\dfrac{1}{S_F^\alpha(v)^\beta}-\dfrac{d_1+s_1}{d_2+s_2}\right)\mathscr{S}[p](S_F^\alpha(v))=\dfrac{p(0)}{S_F^\alpha(v)^\beta}-\dfrac{d_0+s_0}{d_2+s_2}\nonumber\\&\implies&\mathscr{S}[p](S_F^\alpha(v))=\dfrac{p(0)}{1-\dfrac{d_1+s_1}{d_2+s_2}S_F^\alpha(v)^\beta}-\dfrac{\dfrac{d_0+s_0}{d_2+s_2}S_F^\alpha(v)^\beta}{1-\dfrac{d_1+s_1}{d_2+s_2}S_F^\alpha(v)^\beta}.
\end{eqnarray}Now, by taking inverse Sumudu transform and using lemma 2.16, we get \begin{eqnarray}\label{emc6}
    p(S_F^\alpha(t))=p(0)E_F^\alpha\left(\dfrac{d_1+s_1}{d_2+s_2}S_F^\alpha(t)^\beta\right)+\dfrac{d_0+s_0}{d_1+s_1}\left(1-E_F^\alpha\left(\dfrac{d_1+s_1}{d_2+s_2}S_F^\alpha(t)^\beta\right)\right).
\end{eqnarray}
\end{enumerate}

\subsection{Economic model formulated via the wsk-fractal derivative}
\begin{enumerate}[\text{Case }1.]
    \item The price adjustment equation, formulated using the wsk-fractal derivative without the expectations of agents, is given as follows: \begin{eqnarray}\label{emw1}
    ^{\text{wsk}}\mathfrak{D}^\gamma_Fp(S_F^\alpha(t))+k(d_1+s_1)p(S_F^\alpha(t))=k(d_0+s_0).
    \end{eqnarray} Applying Sumudu transform, we get \begin{align}
        &\dfrac{\mathcal{N}(\gamma)}{1-\gamma}\left\{\dfrac{\mathscr{S}[p](S_F^\alpha(v))-p(0)}{1+\dfrac{\gamma}{1-\gamma}S^\alpha_F(v)}\right\}+k(d_1+s_1)\mathscr{S}[p](S_F^\alpha(v))=k(d_0+s_0)\nonumber\\\implies&\left[\dfrac{\mathcal{N}(\gamma)}{1-\gamma}\cdot\dfrac{1}{1+\dfrac{\gamma}{1-\gamma}S^\alpha_F(v)}+k(d_1+s_1)\right]\mathscr{S}[p](S_F^\alpha(v))\nonumber\\&\hspace{3cm}
        =\dfrac{\mathcal{N}(\gamma)}{1-\gamma}\cdot\dfrac{p(0)}{1+\dfrac{\gamma}{1-\gamma}S^\alpha_F(v)}+k(d_0+s_0)\nonumber\\\implies&\mathscr{S}[p](S_F^\alpha(v))=\dfrac{\dfrac{\mathcal{N}(\gamma)}{1-\gamma}\cdot\dfrac{p(0)}{1+\dfrac{\gamma}{1-\gamma}S^\alpha_F(v)}}{\dfrac{\mathcal{N}(\gamma)}{1-\gamma}\cdot\dfrac{1}{1+\dfrac{\gamma}{1-\gamma}S^\alpha_F(v)}+k(d_1+s_1)}+\dfrac{k(d_0+s_0)}{\dfrac{\mathcal{N}(\gamma)}{1-\gamma}\cdot\dfrac{1}{1+\dfrac{\gamma}{1-\gamma}S^\alpha_F(v)}+k(d_1+s_1)}\nonumber\\\implies&\mathscr{S}[p](S_F^\alpha(v))=\dfrac{\mathcal{N}(\gamma)p(0)}{\mathcal{N}(\gamma)+k(d_1+s_1)(1-\gamma)}\cdot\dfrac{1}{1+\dfrac{k\gamma(d_1+s_1)S_F^\alpha(v)}{\mathcal{N}(\gamma)+k(1-\gamma)(d_1+s_1)}}\nonumber\\&\hspace{4.5cm}+\dfrac{d_0+s_0}{d_1+s_1}\cdot\dfrac{\dfrac{\gamma k(d_1+s_1)}{\mathcal{N}(\gamma)+k(d_1+s_1)(1-\gamma)}S^\alpha_F(v)}{1+\dfrac{\gamma k(d_1+s_1)}{\mathcal{N}(\gamma)+k(d_1+s_1)(1-\gamma)}S^\alpha_F(v)}.\label{emw2}
    \end{align}
    Taking inverse Sumudu transform in (\ref{emw2}) and using the lemma (\ref{l1}) along with the remark (\ref{r1}), we get \begin{eqnarray}\label{emw3}
        p(S_F^\alpha(t))&=&\dfrac{\mathcal{N}(\gamma)p(0)}{\mathcal{N}(\gamma)+k(d_1+s_1)(1-\gamma)}e^{-\dfrac{k\gamma(d_1+s_1)S_F^\alpha(t)}{\mathcal{N}(\gamma)+k(1-\gamma)(d_1+s_1)}}\nonumber\\&&\hspace{2cm}+\dfrac{d_0+s_0}{d_1+s_1}\left[1-e^{-\dfrac{\gamma k(d_1+s_1)}{\mathcal{N}(\gamma)+k(d_1+s_1)(1-\gamma)}S_F^\alpha(t)}\right].
    \end{eqnarray}
    \item The price adjustment equation, formulated using the wsk-fractal derivative, when the expectations of agents are considered, is given by\begin{eqnarray}\label{emw4}
        ^{\text{wsk}}\mathfrak{D}^\gamma_Fp(S_F^\alpha(t))-\dfrac{d_1+s_1}{d_2+s_2}p(S_F^\alpha(t))=-\dfrac{d_0+s_0}{d_2+s_2}.
    \end{eqnarray} Applying Sumudu transform, we get \begin{eqnarray}
        &&\dfrac{\mathcal{N}(\gamma)}{1-\gamma}\left\{\dfrac{\mathscr{S}[p](S_F^\alpha(v))-p(0)}{1+\dfrac{\gamma}{1-\gamma}S^\alpha_F(v)}\right\}-\dfrac{d_1+s_1}{d_2+s_2}\mathscr{S}[p](S_F^\alpha(v))=-\dfrac{d_0+s_0}{d_2+s_2}\nonumber\\&\implies&\left[\dfrac{\mathcal{N}(\gamma)}{1-\gamma}\cdot\dfrac{1}{1+\dfrac{\gamma}{1-\gamma}S^\alpha_F(v)}-\dfrac{d_1+s_1}{d_2+s_2}\right]\mathscr{S}[p](S_F^\alpha(v))\nonumber\\&&\hspace{3cm}=\dfrac{\mathcal{N}(\gamma)}{1-\gamma}\cdot\dfrac{p(0)}{1+\dfrac{\gamma}{1-\gamma}S^\alpha_F(v)}-\dfrac{d_0+s_0}{d_2+s_2}\nonumber\\&\implies&\mathscr{S}[p](S_F^\alpha(v))=\dfrac{\dfrac{\mathcal{N}(\gamma)}{1-\gamma}\cdot\dfrac{p(0)}{1+\dfrac{\gamma}{1-\gamma}S^\alpha_F(v)}}{\dfrac{\mathcal{N}(\gamma)}{1-\gamma}\cdot\dfrac{1}{1+\dfrac{\gamma}{1-\gamma}S^\alpha_F(v)}-\dfrac{d_1+s_1}{d_2+s_2}}\nonumber\\&&\hspace{3cm} - \dfrac{\dfrac{d_0+s_0}{d_2+s_2}}{\dfrac{\mathcal{N}(\gamma)}{1-\gamma}\cdot\dfrac{1}{1+\dfrac{\gamma}{1-\gamma}S^\alpha_F(v)} -\dfrac{d_1+s_1}{d_2+s_2}}\nonumber\\&\implies&\mathscr{S}[p](S_F^\alpha(v))=\dfrac{\dfrac{\mathcal{N}(\gamma)p(0)(d_2+s_2)+(d_0+s_0)(1-\gamma)}{1-\dfrac{\gamma(d_1+s_1)}{\mathcal{N}(\gamma)(d_2+s_2)-(1-\gamma)(d_1+s_1)}S^\alpha_F(v)}}{\mathcal{N}(\gamma)(d_2+s_2)-(1-\gamma)(d_1+s_1)}\nonumber\\&&\hspace{2cm}-\dfrac{d_0+s_0}{d_1+s_1}\cdot\dfrac{-\dfrac{\gamma(d_1+s_1)}{\mathcal{N}(\gamma)(d_2+s_2)-(1-\gamma)(d_1+s_1)}S^\alpha_F(v)}{1-\dfrac{\gamma(d_1+s_1)}{\mathcal{N}(\gamma)(d_2+s_2)-(1-\gamma)(d_1+s_1)}S^\alpha_F(v)}.\label{emw5}
    \end{eqnarray} Taking inverse Sumudu transform in (\ref{emw5}) and using the Lemma \ref{l1} along with the Remark \ref{r1}, we get \begin{eqnarray}\label{emw6}
        p(S_F^\alpha(t))&=&\dfrac{\mathcal{N}(\gamma)p(0)(d_2+s_2)+(d_0+s_0)(1-\gamma)}{\mathcal{N}(\gamma)(d_2+s_2)-(1-\gamma)(d_1+s_1)}e^{\dfrac{\gamma(d_1+s_1)}{\mathcal{N}(\gamma)(d_2+s_2)-(1-\gamma)(d_1+s_1)}S_F^\alpha(t)}\nonumber\\&&\hspace{2cm}-\dfrac{d_0+s_0}{d_1+s_1}\left[1-e^{\dfrac{\gamma(d_1+s_1)}{\mathcal{N}(\gamma)(d_2+s_2)-(1-\gamma)(d_1+s_1)}S_F^\alpha(t)}\right].
    \end{eqnarray}
\end{enumerate}
 \section{Conclusion}
 The main findings of this study are  summarized as follows:
 \begin{enumerate}[1.]
     \item The price adjustment equation plays a crucial role in achieving market equilibrium. To incorporate the expectation-forming behavior of economic agents, fractal derivatives are employed to solve the price adjustment equation. Two distinct solutions are obtained: one corresponding to the case in which agents’ expectations are taken into account and another corresponding to the case in which expectations are not considered. Each solution is associated with a specific fractal operator.
     \item By employing non-local fractal operators that incorporate the effects of past economic changes, the relationships among the major factors influencing market equilibrium can be represented in a more comprehensive and detailed manner. These operators provide a broader framework for analyzing the memory-dependent and fractal characteristics of economic dynamics.
 \end{enumerate}

\section*{Declaration}
\noindent  \textbf{Data Availability:} No data were used or generated during the study. Therefore, data availability is not applicable. \\
\noindent
\textbf{Conflicts of Interest:} The authors declare that they have no conflicts of interest regarding the publication of this paper.\\
{\bf Author's Contributions:} All authors contributed equally to the preparation of this manuscript.\\
{\bf Funding Statement:} The authors received no financial support or funding for the research, authorship, or publication of this article.
\section*{Acknowledgments}
The first author acknowledges support of the UGC Junior Research Fellowship, funded under NTA Ref. No.: 231620032207.


\begin{thebibliography}{00}

   \bibitem{Barnsley} M.F. Barnsley, \textit{Fractals everywhere} (Academic press, 2014).

   \bibitem{Cohen} D. Cohen-Vernik, and A. Pazgal, ``Price Adjustment Policy with Partial Refunds", \textit{Journal of Retailing} 93, no. 4 (2017): 507-526. 

   \bibitem{Ali1} A. K. Golmankhaneh, {\it Fractal Calculus and its Applications: $F^\alpha$-Calculus} (World Scientific, 2022). 
   

  
   \bibitem{Ali2} A. K. Golmankhaneh, and C. Tun\c{c}, ``Sumudu transform in fractal calculus", \textit{Applied Mathematics and Computation} 350 (2019): 386-401.

   \bibitem{Ali3}  A. K. Golmankhaneh, and D. Baleanu, ``New Derivatives on the Fractal Subset of Real-Line", \textit{Entropy} 18, no. 2 (2016): 1.

   \bibitem{Ali4} A. K. Golmankhaneh, and D. Baleanu, ``Non-local Integrals and Derivatives on Fractal Sets with Applications", \textit{Open Physics}  14, no. 1 (2016): 542-548. 

   \bibitem{Ali5} A. K. Golmankhaneh, K. K. Ali, R. Yilmazer, and Mohammed K. A. Kaabar, ``Economic Models Involving Time Fractal", \textit{Journal of Mathematical Modeling in Finance} 1, no. 1 (2021): 131-146.




   \bibitem{Mandelbrot} B.B. Mandelbrot, {\it The fractal geometry of nature} (W.H. Freeman and Company, 1983).
   
   
   

   \bibitem{Nagle2011} R. K. Nagle, E. B. Saff, and A. D. Snider, \emph{Fundamentals of Differential Equations}, 8th Edition (Pearson, 2011).
   
   \bibitem{Parvate2003} A. Parvate, and A. Gangal, ``Calculus on fractal subsets of real line - I: Formulation", \textit{Fractals} 17, no. 1 (2009): 53-81. 

   \bibitem{Takayama} A. Takayama, \emph{Mathematical Economics} (Cambridge University Press, 1985).
                    
   \bibitem{Turner} M.J. Turner, {\it Modeling nature with fractals} (Leicester, 2000).


\end{thebibliography}
 \end{document}